\newcommand{\nablax}{\nabla_{\hspace{-2pt}x}\hspace{1pt}}
\newcommand{\nablay}{\nabla_{\hspace{-2pt}y}\hspace{1pt}}
\newcommand{\Tau}{\mathcal{T}}
\newcommand{\Tauh}{\mathcal{T}_h}
\newcommand{\R}{\mathbb{R}}
\newcommand{\D}{\mathcal{D}}
\newcommand{\Epsilon}{\mathcal{E}}
\newtheorem{theorem}{Theorem}[section]
\newtheorem{corollary}{Corollary}
\newtheorem{lemma}[theorem]{Lemma}
\theoremstyle{definition}
\newtheorem{definition}[theorem]{Definition}
\newtheorem{remark}{Remark}
\newtheorem{assumption}[theorem]{Assumption} %
\newtheorem{conclusion}[theorem]{Conclusion} %
\begin{document}

\renewcommand{\thefootnote}{\fnsymbol{footnote}}
\pagenumbering{arabic}
\pagestyle{myheadings}\markright{}
\titleformat{\section}{\normalfont\normalsize\bfseries}{\thesection}{1em}{}
\titleformat{\subsection}{\normalfont\normalsize\scshape}{\thesubsection}{1em}{}
\begin{center}{\bf \large ADAPTIVE HETEROGENEOUS MULTISCALE METHODS FOR IMMISCIBLE TWO-PHASE FLOW IN POROUS MEDIA\footnote{This work was supported by the Deutsche Forschungsgemeinschaft under the contract numbers SCHW 639/3-2 and OH 98/4-2.}}
\end{center}
\renewcommand{\thefootnote}{\arabic{footnote}}
\begin{center}
{\sc \small PATRICK HENNING{$\hspace{0pt}^1$},
MARIO OHLBERGER\footnote[1]{Westf\"alische Wilhelms-Universit\"at M\"unster, Institut f\"ur Numerische und Angewandte Mathematik, Einsteinstr. 62, D-48149 M\"unster, Germany} AND BEN SCHWEIZER\footnote[2]{Technische Universit\"at Dortmund, Fakult\"at f\"ur Mathematik, Vogelpothsweg 87, D-44227 Dortmund, Germany}}
\end{center}

\begin{abstract}
In this contribution we present the first formulation of a heterogeneous
multiscale method for an incompressible immiscible two-phase flow system with
degenerate permeabilities. The method is in a general formulation which includes
oversampling. We do not specify the discretization of the derived 
macroscopic equation, but we give two examples of possible realizations, 
suggesting a finite element solver for the fine scale and a vertex centered finite volume method for the effective coarse scale equations. Assuming periodicity, we show that the method is
equivalent to a discretization of the homogenized equation. We provide an
a-posteriori estimate for the error between the homogenized solutions of the pressure
and saturation equations and the corresponding HMM approximations. The error
estimate is based on the results recently achieved in [C.~Canc{\`e}s, I.~S. Pop, and M.~Vohral\'{\i}k.
An a posteriori error estimate for vertex-centered finite volume
discretizations of immiscible incompressible two-phase flow.
{\em Math. Comp.}, 2014].
\end{abstract}

\paragraph*{Keywords}
adaptivity, HMM, multiscale problem, two-phase flow, porous media

\paragraph*{AMS subject classifications}
76S05, 35B27, 65G99, 65N08, 65N30

\section{Introduction}

In this paper, we consider degenerate two-phase flow in porous media for immiscible and incompressible fluids with highly variable porosity and a highly variable intrinsic permeability. Applications are oil reservoir simulations (with fluids oil and water) or carbon sequestration (with fluids water and liquid carbon dioxide). The numerical treatment of the two-phase flow system with classical finite element (FE) or finite volume (FV) schemes is typically quite expensive. In particular, when it is necessary to resolve the fine scale features of the (rapidly oscillating) hydraulic conductivity, an extremely fine computational mesh is required. In many scenarios this results in a tremendous computational load that cannot be handled by available computers. To overcome this issue, the classical scheme can be combined with a so called multiscale approach that splits the complex global problem into smaller pieces that can be treated independently. In this work we propose and analyze a strategy how this can be accomplished for the two-phase flow system
on the basis of a two scale formulation derived from the underlying fine scale problem.

There is a vast literature on two-phase flow in porous media. Concerning existence and uniqueness of weak solutions we refer to \cite{Kroener:Luckhaus:1984,Chen:2001,Chen:2002}. Various numerical methods have been introduced and analyzed. However, rigorous convergence results are still rare. Convergence of a 'mixed finite element - finite volume method' was shown in \cite{Ohlberger:1997}, a-priori error estimates for a 'mixed finite element - Galerkin finite element method' were derived in \cite{Chen:Ewing:2001}, convergence of a finite volume scheme based on the Kirchhoff transformed system was obtained in \cite{Michel:2003} and convergence of a finite volume phase-by-phase upstream weighting approach was studied in \cite{Eymard:Herbin:Michel:2003}. For a-priori error estimates for a $hp$ discontinuous Galerkin method we refer to \cite{Epshteyn:Riviere:2009}. Furthermore, rigorous a-posteriori error estimation for finite volume discretizations of incompressible two-phase flow equations were recently derived in \cite{Cances:Pop:Vohralik:2012}.

In the case of rapidly oscillating coefficient functions as considered in this contribution, the classical methods (such as FVM, FEM, $hp$-FEM, DG-FEM or mixed FEM) are often too expensive and should be combined with a multiscale approach. There are typically two ways. One way is that the effective (or upscaled) macroscopic features of the coefficients are determined in a preprocess to apply the classical scheme to the new 'effective equation' (cf. \cite{Abdulle:2005,E:Engquist:2003,Gloria:2006,Henning:Ohlberger:2011_2}). The other way is that the multiscale features are used to assemble a suitable set of 'multiscale basis functions' that replaces the original set of basis functions in a classical scheme (cf. \cite{BFH97,Hou:Wu:1997,MaP14,OZB13}).

In this work, we are dealing with the first way of approaching the problem. To state the two-phase flow system, let us denote
\begin{itemize}
\item $\Omega \subset \mathbb{R}^d$, $d=2,3$ an open, bounded domain with
a polyhedral Lipschitz boundary,
\item $\Omega_T:= \Omega \times (0,T]$, where $(0,T] \subset \R_{>0}$ denotes a time interval, 
\item $\Phi^{\epsilon}(x)$ the porosity in $\Omega$ and $K^{\epsilon}(x)$ the
absolute permeability in $\Omega$,
\item $s_w^{\epsilon}(x,t)$, $s_n^{\epsilon}(x,t)$ the saturations and
$p_w^{\epsilon}(x,t)$, $p_n^{\epsilon}(x,t)$  the pressures of wetting and
non-wetting phase respectively,
\item $P_c(s)$ the capillary pressure function (only depending on the
saturation),
\item $k_{r,w}(s)$, $k_{r,n}(s)$ the relative permeabilities (only depending on the
saturation),
\item $\mu_{w}$, $\mu_{n} \in \mathbb{R}_{>0}$ the viscosities of wetting and
non-wetting phase,
\item $\rho_{w}$, $\rho_{n} \in \mathbb{R}_{>0}$ the densities of wetting and
non-wetting fluid,
\item $g \in \mathbb{R}^d$ the downward pointing gravity vector.
\end{itemize}
With these definitions the classical two-phase flow system can be characterized by the following system of six equations. The two equations for the mass balance are given by
\begin{align}
\label{classical-two-phase-system}\Phi^{\epsilon} \partial_t
s^{\epsilon}_{\alpha} + \nabla \cdot u_{\alpha}^{\epsilon} &= 0 \quad \mbox{in}
\enspace \Omega_T,
\end{align}
where the subindex $\alpha\in \{ w, n\}$ stands either
for the wetting phase $w$ or the
non-wetting phase $n$. The flux $u_{\alpha}^{\epsilon}$ is given by the Darcy-Muskat law
\begin{align*}
u_{\alpha}^{\epsilon} = - K^{\epsilon}
\frac{k_{r,\alpha}(s_{\alpha}^{\epsilon})}{\mu_{\alpha}} ( \nabla
p_{\alpha}^{\epsilon} - \rho_{\alpha} g ),
\end{align*}
again for $\alpha\in \{ w, n\}$.
Additionally, we have an algebraic coupling between these equations: the
saturations sum up to $1$ and the difference between the two pressures is
given by the monotonically decreasing capillary pressure function:
\begin{align*}
s^{\epsilon}_w + s^{\epsilon}_n = 1 \quad \mbox{and} \quad P_c(s_w^{\epsilon})=
p_{n}^{\epsilon} - p_{w}^{\epsilon} \enspace \mbox{with} \enspace
P_c^{\prime}(s)<0.
\end{align*}
In order to avoid a numerical scheme that needs to resolve the full microstructure in (\ref{classical-two-phase-system}), which would be far too expensive, we replace the above system by a homogenized/upscaled (i.e. Õoscillation-freeÕ) system of the structure
\begin{align}
\label{initial-upscaled-problem}\Phi^0 \partial_t s^0_{\alpha} - \nabla \cdot \left( K^0_{\epsilon,\kappa} \frac{k_{r,\alpha}(s_{\alpha}^0)}{\mu_{\alpha}} ( \nabla p_{\alpha}^0 - \rho_{\alpha} g ) \right) &= 0 \quad \mbox{in} \enspace \Omega_T, \enspace \mbox{for} \enspace \alpha=w,n,\\
\nonumber s^0_w + s^0_n = 1 \quad \mbox{and} \quad P_c(s_w^0)&= p_{n}^0 - p_{w}^0,
\end{align}
where $\Phi^0$ is a local average of $\Phi^{\epsilon}$ and $K^0_{\epsilon,\kappa}$ is defined by
\begin{align*}
\left(K^0_{\epsilon,\kappa}\right)_{ij}(x) := \int_{Y_{\kappa_0,\kappa}} \hspace{-26pt}-\hspace{7pt} K^{\epsilon}(x+ \kappa y) (e_i + \nablay w_{\kappa}^i(x,y))\cdot  (e_j + \nablay w_{\kappa}^j(x,y)) \hspace{2pt} dy,
\end{align*}
with local representative cells $Y_{\kappa_0,\kappa}$ and $w_{\kappa}^j$ suitable solutions of (cheaply) pre-computed local problems that contain micro-structure information. The upscaled system (\ref{initial-upscaled-problem}) can be solved with feasible cost with any favorite classical solver for the two-phase flow equations. In this sense, our strategy gives rise to a large class of multiscale methods for the two-phase flow system, depending on the classical method that it is combined with. The original idea behind this approach goes back to the heterogeneous multiscale finite element method (HMM) initially introduced in \cite{E:Engquist:2003}. It is known that this method is very reliable in many applications and that it can be interpreted as a discretization of an upscaled problem of the type above (cf. \cite{Gloria:2006,Gloria:2008}). In the subsequent sections, we will give a detailed motivation and justification for the upscaled/homogenized system (\ref{initial-upscaled-problem}). For a priori and a posteriori error estimates of HMM approximations to  elliptic problems we refer to \cite{Abdulle:2005,Abdulle:2009,Abdulle:E:Engquist:Vanden-Eijnden:2012,Abdulle:Nonnenmacher:2011,Henning:Ohlberger:2009,Henning:Ohlberger:2011_2,Ohlberger:2005}. Homogenized problems as (\ref{initial-upscaled-problem}) can be sometimes rigorously justified as limit problems for the case that the characteristic length scale $\epsilon$ of the microscopic oscillations converges to zero. For these kind of homogenization results for the immiscible incompressible two-phase flow equations we refer to \cite{Bourgeat:Hidani:1995,Bourgeat:Luckhaus:Mikelic:1996,Yeh:2006,Henning:Ohlberger:Schweizer:2012} and the references therein.

There are several contributions proposing other types of multiscale methods for the two-phase flow system. However, each of these methods is formulated under the assumption that the capillary pressure can be neglected and consequently that the pressures of the two phases are identical. This is an assumption that we do not make. In the following, we give a survey on multiscale methods for two-phase flow, however, with the remark that the topic is too comprehensive to name all of these schemes.  A local-global upscaling for two-phase flow was stated in \cite{Chen:Li:2009} and an adaptive variational multiscale method (VMM) for multiphase flow was proposed in \cite{Nordbotten:2008}. Furthermore, there are several methods that fit into the framework of the Multiscale Finite Element Method (MsFEM) proposed in \cite{Hou:Wu:1997}) or its modified version, the Mixed Multiscale Finite Element Method, proposed in \cite{Chen:Hou:2003}. Multiscale methods based on a Finite Volume or Finite Volume Element approach can be for instance found in \cite{Lunati:Lee:2009,Lunati:Jenny:2008,Jenny:Lee:Tchelep:2004,Jiang:Mishev:2012,Aarnes:Lie:Kippe:Krogstad:2009,Ginting:2004,Jenny:Lee:Tchelep:2003}. The construction of conservative fluxes in a post-processing step for Generalized Multiscale Finite Element approximations (GMsFEM) was suggested in \cite{Bush:Ginting:Presho:2014}. 
Furthermore, we refer to the hierarchical multiscale method for two-phase flow based on mixed finite elements \cite{Aarnes:Krogstad:Lie:2006}, an adaptive multiscale method also based on mixed finite elements \cite{Aarnes:Efendiev:2006} and a modified multiscale method based on a finite volume scheme on the coarse scale \cite{Efendiev:Ginting:Hou:Ewing:2006}. Finally, a Galerkin- and a mixed multiscale finite element method are analyzed in \cite{Jiang:Aarnes:Efendiev:2012}, however ignoring gravity and capillary effects.

The advantage of our approach compared to other multiscale methods is that the microstructure does not have to be resolved globally, but only in a small set of cubic environments of quadrature points. Hence, the new method can operate far below linear computational complexity in each iteration step, whereas the above mentioned approaches can only at best operate with linear complexity, but never below. Hence, previously uncomputable problems may become computable now.


\section{Weak formulation of the  fine scale two-phase flow equations}
\label{sec2}

In this section we state a weak formulation of the incompressible two-phase flow equations after Kirchhoff transformation and recall a corresponding existence and uniqueness result. For this purpose let us define the phase mobility functions by
\begin{align*}
\lambda_{w}(s) := \frac{k_{r,w}(s)}{\mu_{w}}, \quad \lambda_{n}(s) :=
\frac{k_{r,n}(1-s)}{\mu_{n}} \quad \mbox{and} \quad  \lambda(s) :=  \lambda_{w}(s)
+ \lambda_{n}(s).
\end{align*}
Typically, $\lambda_w$ is an increasing function with $\lambda_w(0)=0$ and
$\lambda_n$ is a decreasing function with $\lambda_n(1)=0$.

%
%
In this setting we can use the Kirchhoff transformation as a mathematical trick to rewrite the problem in terms of a global pressure $P^{\epsilon}$ (c.f.
\cite{Amaziane:et-al:2010,Antontsev:Kazhikhov:Monakhov:1990,Chen:2001}) which allows to formulate a new problem with a strictly positive total mobility $\lambda(s^{\epsilon}_w)$.
Defining
\begin{align}
\label{global-pressure-def}S^{\epsilon}:= s_w^{\epsilon} \quad \mbox{and} \quad
P^{\epsilon} := p_w^{\epsilon} + \int_0^{S^{\epsilon}}
\frac{\lambda_n(s)}{\lambda(s)} P_c^{\prime}(s) \hspace{2pt} ds
\end{align}
we get the desired relation
\begin{align*}
\lambda(S^{\epsilon}) \nabla P^{\epsilon} &= \lambda_w(S^{\epsilon}) \nabla p_w^{\epsilon} + \lambda_n(S^{\epsilon}) \nabla p_n^{\epsilon}.
\end{align*}
Using the Kirchhoff transform $\Upsilon:[0,1] \to \R$ given by
\begin{align}
\label{kirchhoff-transform-def}\Upsilon^{\epsilon}:=\Upsilon(S^{\epsilon}):= -\int_0^{S^{\epsilon}}
\frac{\lambda_w(s)\lambda_n(s)}{\lambda(s)} P_c^{\prime}(s) \hspace{2pt} ds,
\end{align}
we get (cf. \cite{Chavent:Jaffre:1986})  the following (Kirchhoff transformed) two-phase flow system:
\begin{align}
\label{classical-global-pressure-system} - \nabla \cdot \left( K^{\epsilon}
(\lambda(S^{\epsilon}) \nabla P^{\epsilon} - (\lambda_w(S^{\epsilon}) \rho_w +
\lambda_n(S^{\epsilon}) \rho_n) g )\right) &= 0 \\
\nonumber \Phi^{\epsilon} \partial_t S^{\epsilon} - \nabla \cdot \left(
K^{\epsilon} (\lambda_{w}(S^{\epsilon}) \nabla P^{\epsilon} + \nabla
\Upsilon^{\epsilon} - \lambda_w(S^{\epsilon}) \rho_w g) \right) &= 0.
\end{align}
Thanks to the assumptions on $\lambda_{n}$, $\lambda_{w}$ and $P_c$, the Kirchhoff transform $\Upsilon$ defined in (\ref{kirchhoff-transform-def}) is a strictly increasing function on $[0,1]$.

From now on, we assume inhomogeneous Dirichlet boundary conditions for both
saturation and pressure:
\begin{align*}
S^{\epsilon} = \bar{S} \enspace \mbox{and} \enspace P^{\epsilon} = \bar{P} \quad
\mbox{on} \enspace \partial \Omega \times (0,T].
\end{align*}
Properties of the boundary  functions $\bar{S}$ and $\bar{P}$ are specified below. An initial condition is given by
\begin{align*}
S^{\epsilon}(\cdot,0) = S_0 \quad \mbox{in} \enspace \Omega, \quad \mbox{for a function $S_0$ with } 0 \le S_0 \le 1, \mbox{ a.e. in } \Omega. 
\end{align*}
We note that results of this contribution can be easily generalized to mixed Dirichlet/Neumann boundary
conditions. We now state a weak formulation
of the Kirchhoff transformed two-phase flow system
(\ref{classical-global-pressure-system}). In the following, we will work with this weak formulation.

\begin{definition}[Weak formulation of the two-phase flow system]
We define the solution space by
\begin{align}
\label{solution-space}\Epsilon := \{ (S,P)\hspace{2pt} | \hspace{2pt} &S \in
C^0([0,T],L^2(\Omega)), \hspace{2pt} \Phi^{\epsilon} \partial_t S \in
L^2((0,T),H^{-1}(\Omega)),\\
\nonumber&\Upsilon(S)-\Upsilon(\bar{S}) \in L^2((0,T),H^1_0(\Omega)),
\hspace{2pt}P-\bar{P} \in L^2((0,T),H^1_0(\Omega)) \}.
\end{align}
We call $(S^{\epsilon},P^{\epsilon}) \in \Epsilon$ a {\it weak solution} of the Kirchhoff transformed
two-phase flow system (\ref{classical-global-pressure-system}) 
to initial condition $S_0\in L^\infty(\Omega)$, if
\begin{eqnarray}
\label{weak-form-1}\lefteqn{\int_{0}^T \langle \Phi^{\epsilon} \partial_t
S^{\epsilon}(\cdot,t), \Psi(\cdot,t ) \rangle_{H^{-1}(\Omega),H^1_0(\Omega)}
\hspace{2pt} dt}\\
\nonumber&=& - \int_{\Omega_T} K^{\epsilon} (\lambda_{w}(S^{\epsilon}) \nabla
P^{\epsilon} + \nabla \Upsilon(S^{\epsilon}) - \lambda_w(S^{\epsilon}) \rho_w
g) \cdot \nabla \Psi  \hspace{2pt} dt  \hspace{2pt} dx\\
\label{weak-form-2}0 &=& \int_{\Omega_T} K^{\epsilon} ( \lambda(S^{\epsilon})
\nabla P^{\epsilon} - (\lambda_w(S^{\epsilon}) \rho_w + \lambda_n(S^{\epsilon})
\rho_n) g ) \cdot \nabla \Psi \hspace{2pt} dt  \hspace{2pt} dx
\end{eqnarray}
for all $\Psi \in L^2((0,T),H^1_0(\Omega))$ and if $S^{\epsilon}(\cdot,0)=S_0$.
\end{definition}
The weak formulation was proposed by Chen \cite{Chen:2001}, where the above
formulation already incorporates the regularity results that he derived in
Section 4 of the mentioned work. We emphasize that the Kirchhoff transformed two-phase flow equations do no longer have an explicit physical meaning. Under the following assumptions the problem
(\ref{weak-form-1})-(\ref{weak-form-2}) has a unique weak solution (c.f.
\cite{Cances:Pop:Vohralik:2012,Chen:2001}).
\begin{assumption}[For the existence and uniqueness of a weak solution]
We make the following assumptions:
\begin{enumerate}
\item[(A1)] $\lambda_{w}, \lambda_{n} \in C^0[0,1]$,
$\lambda_w(0)=\lambda_n(1)=0$, $\lambda_w(s)>0$ for $s>0$, $\lambda_n(s)>0$ for
$s<1$ and there exist positive constants $c_{\lambda}$ and $C_{\lambda}$ so that
for all $s\in[0,1]$:
\begin{align*}
c_{\lambda} \le \lambda(s) \le C_{\lambda} \quad \mbox{and} \quad \lambda_w(s)
\le C_{\lambda}.
\end{align*}
\item[(A2)] $K^{\epsilon} \in [L^{\infty}(\R^d, \R^{d \times d})]$ with $K^{\epsilon}(x)$ symmetric
for every $x$. There exist positive constants $\alpha, \beta > 0$ such that, for every $\epsilon$ and almost every $x \in \R^d$
\begin{align*}
\alpha |\xi|^2 \le K^{\epsilon}(x) \xi \cdot \xi \le \beta |\xi|^2 \quad \forall
\xi \in \mathbb{R}^d.
\end{align*}
\item[(A3)] $\Phi^{\epsilon} \in L^{\infty}(\Omega)$ and
$0<\phi^*\le\Phi^{\epsilon}\le\Phi^*<1$ for all $\epsilon>0$ and a.e. in
$\Omega$. For simplicity, we assume that $\Phi^{\epsilon}$ is ergodic in the sense that there exists a constant $\Phi^0 \in \mathbb{R}_{>0}$ (independent of $\epsilon$) such that $(\Phi^{\epsilon},1)_{L^2(Y_{\epsilon})}=\Phi^0$ for all cubes $Y_{\epsilon}\subset \mathbb{R}^d$ with edge-length $\epsilon$.
\item[(A4)] The Kirchhoff transform $\Upsilon$ is Lipschitz-continuous and
strictly increasing on $[0,1]$.
\item[(A5)] $\bar{P} \in L^{\infty}((0,T),H^{\frac{1}{2}}(\partial \Omega))$
(which implies the existence of an $L^{\infty}((0,T),H^{{1}}(\Omega))$-extension
that we denote again by $\bar{P}$).
\item[(A6)] $\bar{S} \in L^{\infty}(\partial \Omega \times (0,T) )$ with $0 \le
\bar{S} \le 1$. Furthermore, we assume that $\bar{S}$ can be extended to a measurable function on $\Omega_T$ so that:
\begin{align*}
\partial_t \bar{S} \in L^1(\Omega_T), \quad \Upsilon(\bar{S}) \in
L^2((0,T),H^1(\Omega)), \quad \bar{S}(\cdot,0)=S_0.
\end{align*}
\item[(A7)] $S_0 \in L^{\infty}(\Omega)$ with $0 \le S_0 \le 1$ a.e. in $\Omega$.
\item[(A8)] There exists a positive constants $C_U$ such that
\begin{align*}
&(\lambda(s_2)-\lambda(s_1))^2 + \sum_{\alpha=w,n}
(\lambda_{\alpha}(s_2)-\lambda_{\alpha}(s_1))^2 \\
&\quad \le C_U (s_2 - s_1)(\Upsilon(s_2)-\Upsilon(s_1)) \quad \forall
s_1,s_2\in[0,1].
\end{align*}
\item[(A9)] The solution has the regularity $\nabla P^{\epsilon} \in L^{\infty}(\Omega_T)$.
\end{enumerate}
\end{assumption}
\begin{remark}
Assumption (A2) states that $K^{\epsilon}$ is defined on the whole $\R^d$ instead of only on $\Omega$. This simplifies the formalism in later considerations. Actually, it is only required that $K^{\epsilon}$ is reasonably defined on $\Omega$ and a small environment around. All other values of $K^{\epsilon}$ will not have an influence. 

Assumptions (A8) and (A9) are only required to obtain uniqueness of
the weak solution (c.f. Theorem 3.1. in \cite{Chen:2001}). Furthermore, condition
(A9) could be replaced by assumptions on the domain and the data 
which guarantee the Lipschitz-continuity of the pressure $p(\cdot,t)$
almost everywhere in $t$ (see \cite{Chen:2001,Chen:2002}). For simplicity and to
clarify the presentation we directly work with (A9).

Also recall that, in addition to (A1)-(A9), we made the assumption that $P_c$, $k_{r,w}$ and $k_{r,n}$ only depend on the saturation but not on the space variable $x$. This  is a characterization for the case that the relevant computational domain $\Omega$ is occupied by only one type of soil. In the case of various types of soil, $P_c(x,s)$, $k_{r,w}(x,s)$ and $k_{r,n}(x,s)$ are piecewise constant with respect to $x$. However, we also note that this can be a restrictive assumption in many practical applications, since it is known that there is a relation between spatial variations of $\Phi^{\epsilon}$ and $K^{\epsilon}$ on the one hand, and $P_c$ on the other hand (cf. \cite{Chen:Huan:Ma:2006} and the references therein and in particular the classical work by Leverett and Lewis \cite{Leverett:Lewis:1941}).
\end{remark}

\section{The continuous HMM-type two-scale problem}
\label{section-two-scale-problem}

In this section we motivate and state a two-scale problem that should be seen as the continuous limit of a (discrete) heterogeneous multiscale method for the degenerate two phase flow equations. This problem will be the first step towards the final (previously mentioned) upscaled problem (\ref{initial-upscaled-problem}). In the case of $\epsilon$-periodic coefficient functions, we show that the two-scale problem can be characterized as the limit problem of the exact equation for $\epsilon \rightarrow 0$. Hence, it is justified. 
In the following, we denote by $Y:=(-\frac{1}{2},\frac{1}{2})^d$ the $0$-centered unit cube and we define 
$C^1_{\sharp}(\bar{Y})$ as the space of functions on $Y$, for which the $Y$-periodic extension is continuously differentiable.
The space of periodic $H^1$-functions with zero average is then obtained as
\begin{align*}
\tilde{H}^1_{\sharp} (Y) &:= \overline{ \left\{ v \in C^1_\sharp(\bar{Y})\left| \int_{Y}\hspace{-13pt}-\hspace{7pt} v(y) \enspace dy = 0 \right.\right\}}^{\|\cdot\|_{H^{1}(Y)}}.
\end{align*}

\subsection{Motivation and formulation of the method}
\label{subsection-motivation}

The central idea of the Heterogeneous Multiscale Method (HMM) is rather simple: replace the original (expensive-to-solve) fine-scale problem, by an upscaled (cheap-to-solve) problem which is determined by sampling in local cells. In this context, the upscaled properties in the new equation are nothing but averages of the information that was gained from the 'cell problems'. This strategy was initially formulated in a fully discrete setting, in the sense that the sampling cells where related to a fixed set of quadrature points in a coarse mesh. However, as demonstrated in several contributions (see e.g. \cite{Gloria:2006,Gloria:2008,Henning:Ohlberger:2011_2}) it can be also interpreted (and hence formulated) on a fully continuous level, i.e. as an analytical equation. In specific scenarios, this analytical equation coincides with the classical homogenized equation (cf. \cite{Ohlberger:2005}).

For elliptic problems of the type
\begin{align}
\label{fine-scale-elliptic}\mbox{find } u^{\epsilon} \in H^1_0(\Omega): \qquad \int_{\Omega} K^{\epsilon} \nabla u^{\epsilon} \cdot \nabla v =  \int_{\Omega} f v \qquad \mbox{for all } v \in H^1_0(\Omega),
\end{align}
the HMM strategy (on the continuous level) can be summarized as follows. First, for some small fixed parameter $\kappa \ge \epsilon$, define a 'cell basis', which is simply a set of functions that contains sampled information from small cells.
\begin{definition}[Cell basis]
\label{definition-cell-basis}
Let $w_{\kappa}^i \in L^2(\Omega,\tilde{H}^1_{\sharp}(Y))$ solve the problem
\begin{align}
\label{definition-cell-basis-eqn} \int_Y K^{\epsilon}(x+ \kappa y) (e_i + \nablay w_{\kappa}^i(x,y)) \cdot \nablay \psi(y)  \hspace{2pt} dy = 0 
\end{align}
for all $\psi \in \tilde{H}^1_{\sharp}(Y)$. The set
$
\{w_{\kappa}^i \in L^2(\Omega,\tilde{H}^1_{\sharp}(Y))| \enspace 1 \le i \le d\}
$
is called the {\it cell basis}.
\end{definition}
Observe that the sampling domain is a cubic cell of diameter $\kappa$ centered around $x$. Also observe the similarity to cell problems arising in homogenization theory (cf. \cite{Allaire:1992}).

Based on this cell basis, we define the entries of the effective intrinsic permeability $K^0_{\epsilon,\kappa}$ by
\begin{align}
\label{effective-hydraulic-conductivity}\left(K^0_{\epsilon,\kappa}\right)_{ij}(x) := \int_Y K^{\epsilon}(x+ \kappa y) (e_i + \nablay w_{\kappa}^i(x,y))\cdot  (e_j + \nablay w_{\kappa}^j(x,y)) \hspace{2pt} dy.
\end{align}
It is well known, that $K^0_{\epsilon,\kappa}$ has the same spectral bounds as $K^{\epsilon}$, hence it is a coercive and bounded matrix. With that, we can solve the upscaled elliptic problem:
\begin{align}
\label{upscaled-elliptic}\mbox{find } u^0_{\epsilon,\kappa} \in H^1_0(\Omega): \qquad \int_{\Omega} K^0_{\epsilon,\kappa}\nabla u^0_{\epsilon,\kappa} \cdot \nabla v =  \int_{\Omega} f v \qquad \mbox{for all } v \in H^1_0(\Omega).
\end{align}
Observe that problem (\ref{upscaled-elliptic}) is purely macroscopic and can be solved with any standard method on a coarse grid. Hence, it is extremely cheap to solve, compared to the original problem (\ref{fine-scale-elliptic}).

It is possible to clearly justify that $u^0_{\epsilon,\kappa}$ is a good $L^2$-approximation to $u^{\epsilon}$, by the following result by Gloria \cite{Gloria:2006}: if $K^{\epsilon}$ represents a $G$-convergent sequence (see Definition \ref{def-G-convergence} below), then 
\begin{align*}
\lim_{\kappa \rightarrow 0} \lim_{\epsilon \rightarrow 0} \| u^0_{\epsilon,\kappa} - u^{\epsilon} \|_{L^2(\Omega)} = 0.
\end{align*}
This means that problem (\ref{upscaled-elliptic}), with the upscaled coefficient $K^0_{\epsilon,\kappa}$, yields typically highly accurate approximations, independent of structural assumptions on $K^{\epsilon}$.

The above used notion of $G$-convergence (c.f. \cite{Jikov:Kozlov:Oleinik:1994}) is typically used to describe general homogenization settings.
\begin{definition}[$G$-convergence]\label{def-G-convergence}
Let $\Omega \subset \R^d$ denote a bounded domain and let $(A^{\epsilon})_{\epsilon>0} \subset [L^{\infty}(\Omega)]^{d \times d}$ denote a sequence of symmetric matrices that are uniformly bounded and coercive, i.e. there exist $a_0,a_1 \in \R_{>0}$ such that for a.e. $x\in \Omega$:
\begin{align*}
a_0 |\xi|^2 \le A^{\epsilon}(x) \xi \cdot \xi \le a_1 |\xi|^2 \qquad \forall \xi \in \R^d.
\end{align*}
Then, we call $A^{\epsilon}$ $G$-convergent to $A^0 \in [L^{\infty}(\Omega)]^{d \times d}$ if for any $f \in H^{-1}(\Omega)$ the solutions $u^{\epsilon}\in H^1_0(\Omega)$ of
\begin{align*}
\int_{\Omega} A^{\epsilon} \nabla u^{\epsilon} \cdot \nabla v = f(v) \qquad \forall v \in H^1_0(\Omega)
\end{align*}
satisfy the relations
$u^{\epsilon} \rightharpoonup u^0 \enspace \mbox{in} \enspace H^1_0(\Omega) \quad \mbox{and} \quad A^{\epsilon} \nabla u^{\epsilon}\rightharpoonup A^0 \nabla u^0 \enspace \mbox{in} \enspace [L^2(\Omega)]^d,$
where $u^0 \in H^1_0(\Omega)$ is the solution of 
\begin{align*}
\int_{\Omega} A^0 \nabla u^0 \cdot \nabla v = f(v) \qquad \forall v \in H^1_0(\Omega).
\end{align*} 
\end{definition}
We note that the $G$-limit $A^0$ is bounded and coercive with the same spectral bounds as $A^{\epsilon}$ and that the usage of the upscaled coefficient $K^0_{\epsilon,\kappa}$ is well-established and well-justified for elliptic homogenization problems. For the two-phase flow equations, we now suggest to follow the completely same strategy, meaning that we solve (\ref{definition-cell-basis-eqn}), define $K^0_{\epsilon,\kappa}$ according to (\ref{effective-hydraulic-conductivity}) and simply let it replace $K^{\epsilon}$ in the original two-phase flow problem. This means, that the construction of the effective macroscopic properties is only based on $K^{\epsilon}$, but all other data functions are ignored in the homogenization process. Since it is a priorly not clear that this leads to the correct homogenization setting, we will prove later on, under the assumption of periodicity, that this leads in fact to the correct (upscaled) limit problem.
We summarize the fully continuous HMM.
\begin{definition}[Fully continuous HMM for the original system]
\label{hmm-original-system}
Let $K^0_{\epsilon,\kappa}$ be given by (\ref{effective-hydraulic-conductivity}) and recall $\Phi^0$ from assumption (A3). We call $p_{n}^0$, $p_{w}^0$, $s_{n}^0$ and $s_{w}^0$ the fully continuous HMM approximations {\it in classical formulation} if they solve
\begin{align*}
\Phi^0 \partial_t s^0_{\alpha} - \nabla \cdot \left( K^0_{\epsilon,\kappa} \frac{k_{r,\alpha}(s_{\alpha}^0)}{\mu_{\alpha}} ( \nabla p_{\alpha}^0 - \rho_{\alpha} g ) \right) &= 0 \quad \mbox{in} \enspace \Omega_T, \enspace \mbox{for} \enspace \alpha=w,n,\\
s^0_w + s^0_n = 1 \quad \mbox{and} \quad P_c(s_w^0)&= p_{n}^0 - p_{w}^0.
\end{align*}
These equations are understood in the distributional sense. Furthermore, $p_{n}^0$, $p_{w}^0$, $s_{n}^0$ and $s_{w}^0$ are supposed to have the same boundary and initial conditions as the original (fine scale) two-phase flow system.
\end{definition}
Since questions of existence and uniqueness are hard to answer for the system as stated in Definition \ref{hmm-original-system}, we also formulate a fully continuous HMM for the Kirchhoff-transformed system. Both versions are only equivalent under the assumption of sufficient regularity.
\begin{definition}[Fully continuous HMM in cell problem formulation]
\label{new-fully-continuous-hmm}
Let $K^0_{\epsilon,\kappa}$ be given by (\ref{effective-hydraulic-conductivity}) and $\Phi^0$ as in (A3). We call $(S^c,P^c) \in \Epsilon$ the fully continuous HMM approximation in {\it global pressure formulation} if $S^c(\cdot,0)=S_0$ and if $(S^c,P^c) \in \Epsilon$ solves the following effective two-phase flow system:
\begin{eqnarray}
\label{weak-hmm-form-1}\lefteqn{\int_{0}^T \langle \Phi^0 \partial_t S^c(\cdot,t), \Psi(\cdot,t ) \rangle_{H^{-1}(\Omega),H^1_0(\Omega)} \hspace{2pt} dt}\\
\nonumber&=& - \int_{\Omega_T} K^0_{\epsilon,\kappa} (\lambda_{w}(S^c) \nabla P^c + \nabla \Upsilon(S^c) - \lambda_w(S^c) \rho_w g) \cdot \nabla \Psi  \hspace{2pt} dt  \hspace{2pt} dx\\
\label{weak-hmm-form-2}0 &=& \int_{\Omega_T} K^0_{\epsilon,\kappa} ( \lambda(S^c) \nabla P^c - (\lambda_w(S^c) \rho_w + \lambda_n(S^c) \rho_n) g ) ) \cdot \nabla \Psi \hspace{2pt} dt  \hspace{2pt} dx
\end{eqnarray}
for all $\Psi \in L^2((0,T),H^1_0(\Omega))$. Due to classical results (cf. Chen \cite{Chen:2001} for a general survey and see e.g. also \cite{Alt:DiBenedetto:1985,Arbogast:1992,Antontsev:Kazhikhov:Monakhov:1990,Kroener:Luckhaus:1984}), we can guarantee existence of $(S^c,P^c)$ under assumptions (A1)-(A7) and if assumptions (A8) and (A9) are fulfilled the solution $(S^c,P^c) \in \Epsilon$ is also unique.
\end{definition}
Note that Definition \ref{hmm-original-system} and \ref{new-fully-continuous-hmm} lead to multiscale methods that can operate with very low computational costs. First, in a preprocessing step, it only involves once the computation of the cell basis given by $\{w_{\kappa}^i \in L^2(\Omega,\tilde{H}^1_{\sharp}(Y))| \enspace 1 \le i \le d\}$ (which is of the same cost as for the HMM for standard linear elliptic problems). This can be done with a standard finite element method and it can be done in parallel. After that step, one only has to solve the standard two phase flow system, now with coarse scale coefficients.

An additional advantage is that we do not have to deal with the convergence of numerical methods again; existing results can be used because problem (\ref{weak-hmm-form-1})-(\ref{weak-hmm-form-2}) is a standard two-phase problem. The multiscale features are completely hidden in the computation of $K^0_{\epsilon,\kappa}$. We straightforwardly obtain convergence of numerical approximations to the coarse scale solution $(S^c,P^c)$. The remaining error between $(S^c,P^c)$ and $(S^{\epsilon},P^{\epsilon})$ is a small (nonnumerical) modeling error.

Before we draw our attention to a justification of (\ref{weak-hmm-form-1})-(\ref{weak-hmm-form-2}) by homogenization theory, we comment on the possibilities in constructing the cell basis.

\subsection{Notes on the cell problems}
\label{subsection-cell-problem-formulation}

In the formulation of the method we propose a periodic boundary condition for the local problems given by (\ref{definition-cell-basis-eqn}), i.e. the problems that characterize the cell basis. We note that other choices of boundary conditions are possible, but numerical experiments indicate that periodic conditions are quite flexible and they are typically less affected by resonance errors than e.g. Dirichlet or Neumann boundary conditions (c.f. \cite{Abdulle:2005,Gloria:2006}). Resonance errors are errors that arise from a mismatch between the prescribed boundary condition and the wave length of the fine scale variations. As an effect, the local solutions show strong unnatural oscillations close to the boundaries of cells that can distort the final upscaled approximation. The oscillations should be therefore truncated, which is typically done by using oversampling techniques:

\begin{remark}[Oversampling]
\label{oversampling-technique}In general, the periodic boundary condition is a wrong boundary condition for the localized fine scale equations. Since the correct boundary condition is unknown, an oversampling technique is required. This means that the local problems are solved in larger domains, but only the interior information (with a certain distance to the boundary) is used to compute the averages that are communicated to the coarse scale equation. In our example, we only need to change the definition of the intrinsic permeability $K^0_{\epsilon,\kappa}$ by taking the average over a smaller integral. This means that the coarse scale system (\ref{weak-hmm-form-1})-(\ref{weak-hmm-form-2}) remains formally the same, but the intrinsic permeability $K^0_{\epsilon,\kappa}$ is replaced by a new coefficient $K^0_{\epsilon,\kappa,\kappa_0}$ which is defined by
\begin{align*}
\left(K^0_{\epsilon,\kappa,\kappa_0}\right)_{ij}(x) := \int_{Y_{\kappa_0,\kappa}} \hspace{-26pt}-\hspace{7pt} K^{\epsilon}(x+ \kappa y) (e_i + \nablay w_{\kappa}^i(x,y))\cdot  (e_j + \nablay w_{\kappa}^j(x,y)) \hspace{2pt} dy,
\end{align*}
where $Y_{\kappa_0,\kappa} := \left( - \frac{\kappa_0}{2\kappa}, \frac{\kappa_0}{2\kappa} \right)^d$ and $\kappa \ge \kappa_0 \ge \epsilon$.
\end{remark}

\subsection{Homogenization}
\label{subsection-homogenization}

In this section, we restrict ourselves to a periodic setting that is specified by the assumptions below. This section is to justify the method that we proposed in Definition \ref{hmm-original-system} and \ref{new-fully-continuous-hmm}. More precisely, we relate the HMM stated in Definition \ref{new-fully-continuous-hmm} to classical homogenization theory. The finding is that, under the assumption of a periodic structure, $S^c$ and $P^c$ are exactly the homogenized solutions. To prove this result, we make the following assumptions:

\begin{assumption}[For homogenization theory]
We make the following assumptions:
\begin{enumerate}
\item[(A10)] We have $\Phi^{\epsilon}(x)=\Phi(\frac{x}{\epsilon})$, where $\Phi \in L^{\infty}_{\sharp}(Y)$.
\item[(A11)] In addition to assumption (A2), we have $K^{\epsilon}(x)=K(x,\frac{x}{\epsilon})$, where $K \in [L^{\infty}(\Omega, L^{\infty}_{\sharp}(Y))]^{d \times d}$ and such that there exist positive constants $\alpha$ and $\beta$ so that a.e. in $\Omega \times Y$:
\begin{align*}
\alpha |\xi|^2 \le K(x,y) \xi \cdot \xi \le \beta |\xi|^2 \quad \forall \xi \in \mathbb{R}^d.
\end{align*}
Furthermore, $K(x,\frac{x}{\epsilon})$ is meassurable and fulfills 
\begin{align*}
\underset{\epsilon \rightarrow 0}{\mbox{lim}} \int_{\Omega} K_{ij}(x,\frac{x}{\epsilon})^2 \hspace{2pt} dx = \int_{\Omega} \int_Y K_{ij}(x,y)^2 \hspace{2pt} dy \hspace{2pt} dx \quad  1 \le i,j\le d
\end{align*}
(the last two properties are for instance fulfilled if $K \in [C^0(\overline{\Omega}, L^{\infty}_{\sharp}(Y))]^{d \times d}$.
\item[(A12)] Let the boundary saturation be independent of time, i.e. $\bar{S}(x,t)=\bar{S}(x)$, and $P_c(\bar{S}) \in L^{1}(\Omega)$ and $G_{\alpha}(\bar{S}) \in H^{1}(\Omega)$, $\alpha=w,n$, where we defined
\begin{align*}
G_w(\bar{S}):= - \int_0^{\bar{S}}
\frac{\lambda_n(s)}{\lambda(s)} P_c^{\prime}(s) \hspace{2pt} ds \quad \mbox{and} \quad G_n(\bar{S}):= G_w(\bar{S}) + P_c(\bar{S}).
\end{align*}
\item[(A13)]The inverse Kirchhoff transform $\Upsilon^{-1}$ is $\theta$-H\"older-continuous with $0<\theta\le1$ and constant $C_{\Upsilon^{-1}}$.
\end{enumerate}
\end{assumption}

The last assumption (A13) is fulfilled for most of the
capillary pressure -- saturation relationships $P_c(s)$, e.g. the van Genuchten model (cf. \cite{Bear:1972}). We can now formulate the convergence result.

\begin{theorem}[Convergence in the periodic setting]
\label{homogenization-result}We consider a sequence of solutions $(S^{\epsilon},P^{\epsilon}) \in \Epsilon$ to equations (\ref{weak-form-1})-(\ref{weak-form-2}). Let assumptions (A1)-(A7) and (A10)-(A13) be fulfilled and let $\kappa=k \epsilon$, where $k \in \mathbb{N}_{>0}$ is fixed. In this setting, we can replace the definition of $K^0_{\epsilon,\kappa}$ in (\ref{effective-hydraulic-conductivity}) by
\begin{align}
\label{effective-hydraulic-conductivity-periodic}\left(K^0_{\epsilon,\kappa}\right)_{ij}(x) := \int_Y K(x,\frac{x+ \kappa y}{\epsilon}) (e_i + \nablay w_{\kappa}^i(x,y))\cdot  (e_j + \nablay w_{\kappa}^j(x,y)) \hspace{2pt} dy.
\end{align}
By $(S^c,P^c) \in \Epsilon$ we denote the corresponding HMM approximation given by (\ref{weak-hmm-form-1})-(\ref{weak-hmm-form-2}) and with $S^c(\cdot,0)=S_0$. Under these assumptions, there exists a subsequence $(S^{\epsilon^{\prime}},P^{\epsilon^{\prime}})$ of $(S^{\epsilon},P^{\epsilon})$ such that
\begin{align*}
S^{\epsilon^{\prime}} \rightarrow S^c \quad \mbox{in} \enspace L^2(\Omega_T) \quad \mbox{and} \quad P^{\epsilon^{\prime}} \rightharpoonup P^c \quad \mbox{in} \enspace L^2(\Omega_T) \qquad \mbox{as } \epsilon^{\prime} \to 0.
\end{align*}
\end{theorem}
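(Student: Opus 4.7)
The plan is to show that, under the periodicity assumption together with the choice $\kappa = k\epsilon$ (with $k\in\mathbb{N}_{>0}$ fixed), the HMM effective tensor $K^0_{\epsilon,\kappa}$ in fact coincides with the classical periodic homogenized tensor; once this identification is made, the theorem reduces to a classical homogenization result for the Kirchhoff-transformed two-phase flow system, which is available in the literature.

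First I would verify that the cell basis $w_\kappa^i(x,\cdot)$ essentially solves the standard periodic cell problem associated with $K(x,\cdot)$. Indeed, with the redefinition (\ref{effective-hydraulic-conductivity-periodic}) we have $K(x,(x+\kappa y)/\epsilon) = K(x, x/\epsilon + k y)$, which, by $Y$-periodicity of $K(x,\cdot)$ in the second argument and integrality of $k$, is $(1/k)$-periodic in $y$. Using this periodicity (or the change of variables $z = ky+\{x/\epsilon\}$ and splitting $Y$ into $k^d$ sub-cubes) one shows that $w_\kappa^i(x,\cdot)$ can be written as a rescaling/translation of $\chi^i(x,\cdot) \in \tilde{H}^1_{\sharp}(Y)$, the classical corrector solving
\begin{equation*}
\int_Y K(x,y)\bigl(e_i + \nabla_y \chi^i(x,y)\bigr)\cdot \nabla_y \psi(y)\,dy = 0 \quad \forall \psi \in \tilde{H}^1_\sharp(Y).
\end{equation*}
A direct computation then gives $K^0_{\epsilon,\kappa}(x) = K^{\mathrm{hom}}(x)$, where $K^{\mathrm{hom}}_{ij}(x) = \int_Y K(x,y)(e_i + \nabla_y \chi^i)\cdot (e_j + \nabla_y \chi^j)\,dy$ is the standard $H$-limit tensor. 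This step makes the whole HMM system (\ref{weak-hmm-form-1})--(\ref{weak-hmm-form-2}) independent of $\epsilon$ and $\kappa$; it is the piece that requires some care but is essentially a periodicity/averaging computation.

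Second, I would invoke classical homogenization for the Kirchhoff-transformed immiscible two-phase flow system. Under the assumptions (A1)--(A7) and (A10)--(A13), \emph{a priori} estimates from energy identities (testing the Darcy equation with $P^\epsilon - \bar{P}$ and the saturation equation with $\Upsilon(S^\epsilon)-\Upsilon(\bar{S})$) provide uniform bounds on $\nabla P^\epsilon$ and $\nabla \Upsilon(S^\epsilon)$ in $L^2(\Omega_T)$, together with uniform estimates on $\Phi^\epsilon \partial_t S^\epsilon$ in a suitable negative-order space. A compactness argument of Kolmogorov type (combined with the strict monotonicity and Hölder inverse of $\Upsilon$ in (A4), (A13)) yields strong $L^2(\Omega_T)$ convergence of a subsequence $S^{\epsilon'} \to S^0$, while $P^{\epsilon'} \rightharpoonup P^0$ weakly in $L^2(\Omega_T)$. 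Two-scale convergence of $K^\epsilon \nabla P^\epsilon$ and $K^\epsilon \nabla \Upsilon(S^\epsilon)$, with the correctors given by the cell problems above, identifies $(S^0,P^0)$ as a weak solution of the upscaled system with tensor $K^{\mathrm{hom}}$. This is exactly the content of, e.g., \cite{Henning:Ohlberger:Schweizer:2012,Bourgeat:Hidani:1995}; I would cite their result rather than repeat the full two-scale argument.

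Combining the two steps, $(S^0, P^0)$ satisfies precisely the HMM system (\ref{weak-hmm-form-1})--(\ref{weak-hmm-form-2}). Since by Definition \ref{new-fully-continuous-hmm} the pair $(S^c,P^c)$ also solves this system with the same initial and boundary data, and since the HMM tensor $K^0_{\epsilon,\kappa}=K^{\mathrm{hom}}$ is bounded and coercive, existence/uniqueness theory for the standard two-phase problem (Chen \cite{Chen:2001}) identifies $(S^0,P^0) = (S^c, P^c)$ (up to extracting a further subsequence if uniqueness is not assumed), giving the claimed convergence.

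The main obstacle is the first step: carefully reducing the HMM cell problem with sampling domain of size $\kappa = k\epsilon$ and ``slow/fast'' coefficient $K(x, (x+\kappa y)/\epsilon)$ to the classical periodic corrector problem. All other ingredients are either \emph{a priori} energy estimates or can be cited from the existing homogenization literature for immiscible two-phase flow.
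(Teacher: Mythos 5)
Your proposal is correct and follows essentially the same route as the paper: the paper likewise (i) uses the shift $w_i(x,y):=w^i_{\epsilon}(x,y-\tfrac{x}{\epsilon})$ and the periodicity of $K(x,\cdot)$ to identify $K^0_{\epsilon,\kappa}$ with the classical homogenized tensor (this is exactly Corollary \ref{modeling-error-corollary}), (ii) obtains the two-scale homogenized limit of $(S^{\epsilon},P^{\epsilon})$ from the a~priori bounds and the Amaziane et al.\ compactness lemma, and (iii) concludes by matching the HMM system with the homogenized one. The only cosmetic difference is ordering: you identify the tensor first and cite the two-phase homogenization literature, whereas the paper derives the two-scale system explicitly and then reconstructs the fine-scale correctors $P^f,\Upsilon^f$ from the cell basis to exhibit $(S^c,P^c)$ as its solution.
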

Hence, Theorem \ref{homogenization-result} shows that fully continuous HMM approximations are nothing but the homogenized solutions to (\ref{weak-form-1})-(\ref{weak-form-2}) in the periodic setting. The proof of this theorem is provided in Section \ref{section-proofs}, using two-scale convergence.

\begin{corollary}[Modeling error]
\label{modeling-error-corollary}By modeling error, we refer to the error that arises from the difference between the homogenized coefficient $K^0$ (i.e. the $G$-limit of $K^{\epsilon}$, recall Definition \ref{def-G-convergence}) and the upscaled HMM coefficient $K^0_{\epsilon,\kappa}$. Using definition (\ref{effective-hydraulic-conductivity-periodic}) and under assumption (A11) (i.e. periodicity) this error is equal to zero.
\end{corollary}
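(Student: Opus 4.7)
The plan is to show that the HMM effective tensor $K^0_{\epsilon,\kappa}$ defined by (\ref{effective-hydraulic-conductivity-periodic}) coincides pointwise with the classical periodic homogenized tensor $K^0$, which (under (A11)) equals the $G$-limit of $K^\epsilon$. The crucial quantitative ingredient is the hypothesis $\kappa=k\epsilon$ for an integer $k\in\mathbb{N}_{>0}$: this makes the change of variables $z\mapsto x/\epsilon+ky$ map the unit cell $Y$ exactly onto a union of integer-many periodicity cells of $K(x,\cdot)$, so no boundary mismatch survives.

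The first step is to recall the classical corrector problem: for a.e.\ $x\in\Omega$ and each $i$, the periodic corrector $w^i(x,\cdot)\in\tilde{H}^1_\sharp(Y)$ solves
\begin{equation*}
\int_Y K(x,z)\bigl(e_i+\nabla_z w^i(x,z)\bigr)\cdot\nabla_z\phi(z)\,dz=0\quad\forall\,\phi\in\tilde{H}^1_\sharp(Y),
\end{equation*}
and the $G$-limit of $K^\epsilon=K(x,x/\epsilon)$ is $K^0_{ij}(x)=\int_Y K(x,z)(e_i+\nabla_z w^i)\cdot(e_j+\nabla_z w^j)\,dz$. Next I would perform the change of variables $z=x/\epsilon+ky$ in the HMM cell problem (\ref{definition-cell-basis-eqn}) with $\kappa=k\epsilon$, defining $W^i(x,z):=k\,w^i_\kappa(x,(z-x/\epsilon)/k)$. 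A direct chain-rule computation gives $\nabla_z W^i=(\nabla_y w^i_\kappa)\circ y(z)$, and the correspondence $\psi(y)\leftrightarrow\Psi(z):=k\,\psi((z-x/\epsilon)/k)$ is a bijection between $\tilde{H}^1_\sharp(Y)$ and $\tilde{H}^1_\sharp(kY)$ (mean-zero and periodic on the $k$-fold cell, after translation by $x/\epsilon$). One then verifies that the HMM cell problem is equivalent to the statement that $W^i(x,\cdot)$ solves the corrector problem on $kY$ (translated by $x/\epsilon$):
\begin{equation*}
\int_{kY} K(x,z)\bigl(e_i+\nabla_z W^i(x,z)\bigr)\cdot\nabla_z\Psi(z)\,dz=0\quad\forall\,\Psi\in\tilde{H}^1_\sharp(kY),
\end{equation*}
where translation invariance of both the coefficient $K(x,\cdot)$ and the $kY$-periodicity lets me drop the $x/\epsilon$-shift.

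Then comes the key reduction: because $K(x,\cdot)$ is $Y$-periodic it is in particular $kY$-periodic, and the $Y$-periodic corrector $w^i(x,\cdot)$ is automatically $kY$-periodic with zero mean on $kY$ and solves the PDE in the classical strong/weak sense. By uniqueness (modulo constants) of the $kY$-corrector, $W^i(x,\cdot)=w^i(x,\cdot)$. Finally I apply the same change of variables to the HMM tensor in (\ref{effective-hydraulic-conductivity-periodic}):
\begin{equation*}
(K^0_{\epsilon,\kappa})_{ij}(x)=k^{-d}\int_{x/\epsilon+kY}K(x,z)(e_i+\nabla_z w^i)\cdot(e_j+\nabla_z w^j)\,dz.
\end{equation*}
The integrand is $Y$-periodic, so by translation invariance and the standard identity $\int_{kY}f=k^d\int_Y f$ for $Y$-periodic $f$, the factors $k^{-d}$ and $k^d$ cancel and I recover $K^0_{ij}(x)$ exactly.

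The only delicate point in the argument is bookkeeping under the change of variables, in particular checking that the rescaling factor $k$ in the definition of $W^i$ correctly produces $\nabla_z W^i=(\nabla_y w^i_\kappa)$ without a stray $1/k$, and that the mean-zero and $kY$-periodicity conditions are the correct image of the $Y$-periodic mean-zero conditions; there is no analytic difficulty, only the risk of a miscounted power of $k$. Once that is done, the uniqueness step for the $kY$-corrector is standard (coercivity of $K$ by (A11) gives uniqueness up to additive constants), and the conclusion $K^0_{\epsilon,\kappa}\equiv K^0$ follows, hence the modeling error vanishes.
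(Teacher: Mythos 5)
Your argument is correct and follows essentially the same route as the paper: transform the HMM cell problem by a translation/rescaling so that its corrector is identified with the classical $Y$-periodic corrector, use periodicity of the integrand to equate the averaged tensors, and invoke the classical result that the $G$-limit of a locally periodic coefficient is given by the periodic homogenization formula. The only difference is that you carry out the general case $\kappa=k\epsilon$ explicitly via the $kY$-cell problem and uniqueness of its corrector, whereas the paper treats $k=1$ by a pure translation (equation (\ref{def-K-0-nonsymmetric})) and disposes of $k>1$ with the remark that the $Y$-periodic solution can be ``glued'' to give the solution on the larger cell --- which is exactly the fact you prove.
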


A proof of the corollary is also given in Section \ref{section-proofs}.

\begin{remark}
Under the assumptions of Theorem \ref{homogenization-result} (and as a consequence of Corollary \ref{modeling-error-corollary}) we observe that $K^0_{\epsilon,\kappa}$ is independent of $\epsilon$ and $\kappa$. Hence, we can denote $K^0:=K^0_{\epsilon,\kappa}$. On the other hand, comparing (\ref{effective-hydraulic-conductivity}) and (\ref{effective-hydraulic-conductivity-periodic}) (which differ, since one time we use
$K^{\epsilon}(x+ \kappa y) = K(x+ k \epsilon y,\frac{x+ \kappa y}{\epsilon})$
and one time we use $K(x,\frac{x+\kappa y}{\epsilon})$) we see that for Lipschitz-continuous $K$ the $k \epsilon$ perturbation on the
coarse scale vanishes for $\epsilon \rightarrow 0$ in (\ref{weak-hmm-form-1})-(\ref{weak-hmm-form-2}). In this case, our result remains also valid for the original HMM scheme with $K^0_{\epsilon,\kappa}$ given by (\ref{effective-hydraulic-conductivity}), in the sense that $S^{\epsilon^{\prime}} - S^c \rightarrow  0$ in $L^2(\Omega_T)$ (note that $S^c$ depends on $\epsilon$ in this case).
\end{remark}

\section{Numerical treatment of the HMM two-scale problem}
\label{section-numerical-treatment}

In this section, we state two examples of a fully discrete heterogeneous multiscale method (in cell problem formulation) based on the finite elements discretization on the fine scale and a vertex-centered finite volume discretization on the coarse scale. One realization is based on the Kirchhoff transformed equations (\ref{weak-hmm-form-1})-(\ref{weak-hmm-form-2}) and the other one is based on the HMM realization for not-transformed system as stated in Definition \ref{hmm-original-system}. Furthermore, we present an a posteriori estimate for the error between homogenized solutions and HMM approximations. The estimate is up to a modeling error in the sense of Corollary \ref{modeling-error-corollary} and hence equal to zero for the periodic case.

\subsection{Fully discrete HMM approximation}

We introduce the following notations: let $0=t^0<t^1<...<t^N=T$ denote a partition of the time interval $(0,T]$,
with $I_n:=(t^{n-1},t^n]$, $\triangle t^{n}:=t^n-t^{n-1}$ and $\tau
:=$max$\{\triangle t^{n}|1\le n\le N\}$. For each time step $t^n$ ($0\le n \le
N$), let $\mathcal{T}_H^n(\Omega)$ denote
a regular simplicial partition of $\Omega$. The corresponding space of piecewise linear functions is given by:
\begin{align*}
V_H^n(\Omega) := \{ \Psi_H \in C^0(\bar{\Omega})| \hspace{2pt} (\Psi_H)_{|\mathcal{T}}\in\mathbb{P}^1(\mathcal{T}) \enspace \forall \mathcal{T} \in \mathcal{T}_H^n(\Omega)\},
\end{align*}
where $\mathbb{P}^1(\mathcal{T})$ denotes the space of affine functions on $\mathcal{T}$. By $H$ we denote the maximum diameter of an element of $\Tau_H^n(\Omega)$ and $\mathcal{N}_H^n$ defines the set of nodes in $\mathcal{T}_H^n(\Omega)$. For each of the meshes $\mathcal{T}_H^n$, we let $\mathcal{D}_H^n$ denote the corresponding dual tessellation with control volumes $\mathcal{D}$. The dual tessellation $\mathcal{D}_H^n$ is defined as follows: Let $x_i \in \mathcal{N}_H^n$ be a given node, then the corresponding dual volume $\mathcal{D}_i$ is the star-shaped domain with center $x_i$, a polyhedral boundary and where the corners are the points of the set
\begin{align*}
C_i:=\{ \enspace c \in \Omega| \enspace & \mbox{there exists a codim-}k \enspace \mbox{element } E \mbox{ of } \Tau_H^n(\Omega) \mbox{ with }  0 \le k < d\\
&\mbox{and s.t. } c \enspace \mbox{is the barycenter of } E \mbox{ and} \enspace x_i\in\bar{E}\}.
\end{align*}
The union of all these control volumes $\mathcal{D}_i$ defines the dual grid $\mathcal{D}_H^n$. In $2d$, $\mathcal{D}_i$ is obtained by connecting the barycenters of all the elements and edges that are adjacent to $x_i$. 
For $\mathcal{D} \in \mathcal{D}_H^n$, we let $x_{\mathcal{D}}$ define the center of $\mathcal{D}$ (i.e. $x_{\mathcal{D}}$ is a node of $\mathcal{T}_H^n(\Omega)$) and $H_{\D}$ the diameter of the element $\D$. Furthermore, by $\mathcal{D}_H^{\mbox{\tiny int},n} \subset \mathcal{D}_H^n$ we define the set that contains the elements of the dual mesh that belong to the interior nodes $\mathcal{T}_H^n(\Omega)$. Finally, we also define
\begin{align*}
V_{H,\tau}(\Omega_T):= \{ \Psi_{H,\tau} \in C^0(\bar{\Omega_T}) | \enspace &\Psi_{H,\tau}(x,\cdot)_{|I_n} \in \mathbb{P}^1(I_n) \enspace \forall x \in \bar{\Omega}, \enspace 1 \le n \le N;\\
&\Psi_{H,\tau}(\cdot,t^n) \in V_H^n(\Omega), \enspace 0 \le n \le N \}.
\end{align*}
The following assumption only yields a small simplification for formulating the method and deriving a corresponding a posteriori error estimate. We assume that $\bar{S}$ and $\bar{P}$ are piecewise linear in order to avoid boundary approximation in the final scheme.
\begin{assumption}[For formulating the fully discrete HMM]
We make the following assumption:
\begin{enumerate}
\item[(A14)] the boundary functions $\bar{S}$ and $\bar{P}$ are continuous and piecewise linear, i.e. $\bar{S}$, $\bar{P} \in V_{H,\tau}(\Omega_T)$.
\end{enumerate}
\end{assumption}
We can finally define the discrete spaces incorporating the boundary conditions:
\begin{align*}
V_{H;\bar{S}}^n(\Omega) &:= \{ \Psi_H \in V_{H}^n(\Omega)| \hspace{2pt} \Psi_H=\bar{S}(\cdot,t^n) \enspace \mbox{on} \enspace \partial \Omega\},\\
V_{H,\tau;\bar{S}}(\Omega_T)&:=\{ \Psi_H \in V_{H,\tau}(\Omega_T)| \hspace{2pt} \Psi_H=\bar{S}  \enspace \mbox{on} \enspace \partial \Omega \times (0,T] \},\\
V_{H;\bar{P}}^n(\Omega) &:= \{ \Psi_H \in V_{H}^n(\Omega)| \hspace{2pt} \Psi_H=\bar{P}(\cdot,t^n) \enspace \mbox{on} \enspace \partial \Omega\},\\ 
V_{H,\tau;\bar{P}}(\Omega_T) &:= \{ \Psi_H \in V_{H,\tau}(\Omega_T)| \hspace{2pt} \Psi_H=\bar{P}  \enspace \mbox{on} \enspace \partial \Omega \times (0,T] \}.
\end{align*}
In the following, we use the notation $\Psi_{H \tau}^n:=\Psi_{H \tau}(\cdot,t^n)$ for $\Psi_{H \tau} \in V_{H,\tau}(\Omega_T)$ and by $S_{H\tau}^0$ we denote a suitable approximation of the initial value $S_0$.
In order to solve local problems (\ref{definition-cell-basis-eqn}) numerically, we require a regular periodic triangulation of $Y=\left(-\frac{1}{2},\frac{1}{2}\right)^d$ that is denoted by $\mathcal{T}_h(Y)$. We extend $\Tau_h(Y)$ periodically to a triangulation $\Tau_h(\R^d)$ of whole $\R^d$. A corresponding discrete space is given by
\begin{align}
\label{def-W-h} W_h(Y) := \{ \psi_h \in C^0(\bar{Y}) \cap \tilde{H}^1_{\sharp}(Y)| \hspace{2pt} (\psi_h)_{|\mathcal{Y}}\in\mathbb{P}^1(\mathcal{Y}) \enspace \forall \mathcal{Y} \in \mathcal{T}_h(Y)\}
\end{align}
and the set of faces is given by
\begin{align*}
\Gamma (\mathcal{T}_h(Y)) := \{ E | \hspace{2pt} E \hspace{-1pt}=\hspace{-1pt} \mathcal{Y}_1\hspace{-1pt} \cap \hspace{-1pt}\mathcal{Y}_2 \hspace{-1pt}\subset \hspace{-1pt}\Big[\hspace{-4pt}-\frac{1}{2},\frac{1}{2}\Big)^d, \hspace{2pt} \mathcal{Y}_1,\mathcal{Y}_2 \hspace{-1pt}\in\hspace{-1pt} \mathcal{T}_h(\R^d) \enspace \mbox{and} \enspace \mbox{codim}(E)\hspace{-1pt}=\hspace{-1pt}1\}.
\end{align*}
Note that we might identify $\mathcal{T}_h(Y)$ with a triangulation of the $d$-dimensional torus $\mathbb{T}^d$. With this interpretation, we see that $\Gamma (\mathcal{T}_h(Y))$ only contains inner faces, because the torus does not have a boundary. A jump over a face $E \subset \partial Y$ should be therefore seen as a jump over a face on the torus mesh (after mapping it accordingly).

Boundary faces do not exist. A jump over a face $E \subset \partial Y$ must be therefore seen as a jump over a corresponding opposite face. We also denote $h_E:=$diam$(\mathcal{Y} \cup \tilde{\mathcal{Y}})$ where $E = \mathcal{Y} \cap \tilde{\mathcal{Y}} \in \Gamma(\Tauh(Y))$.

$\\$
For the rest of the paper, we denote for simplification
\begin{align}\label{K-eps-kappa-def} 
K_{\epsilon,\kappa}(x,y):=K^{\epsilon}(x + \kappa y).
\end{align}
We first propose a finite element discretization to assemble the local cell basis that defines the discrete effective intrinsic permeability that is communicated to the coarse scale equation:
\begin{definition}[Discrete effective intrinsic permeability]
Let $\kappa \ge \epsilon>0$ and let $x_{\mathcal{D}} \in \mathcal{N}_H^n$ denote a center of an element $\mathcal{D}$ of the dual grid $\mathcal{D}_H^n$. By $K_{\epsilon,\kappa,h}(x_{\mathcal{D}},\cdot)$ we denote an arbitrary piecewise constant approximation of $K_{\epsilon,\kappa}(x_{\mathcal{D}},\cdot)$ (see (\ref{K-eps-kappa-def})) i.e. we assume for all $\D \in \D_H^n$
\begin{align*}
K_{\epsilon,\kappa,h}(x_{\mathcal{D}},\cdot)_{|\mathcal{Y}} \in \mathbb{P}^0(\mathcal{Y}) \quad \forall \mathcal{Y} \in \Tauh(Y).
\end{align*}
For instance, we might use the local mean value on every cell $\mathcal{Y}$.
Then, we denote $w_{\kappa,h}^i(x_{\mathcal{D}},\cdot) \in W_h(Y)$ the solution of
\begin{align*}
\int_Y K_{\epsilon,\kappa,h}(x_{\D},y) (e_i + \nablay w_{\kappa,h}^i(x_{\mathcal{D}},y)) \cdot \nablay \psi_h(y) \hspace{2pt} dy = 0 \quad \forall \psi_h \in W_h(Y).
\end{align*}
The entries of the (piecewise constant) discrete effective intrinsic permeability $K^0_{\epsilon,\kappa,h}$ are now given by
\begin{align*}
\left(\left(K^0_{\epsilon,\kappa,h}\right)_{ij}\right)_{|\mathcal{D}} := \int_{Y} \hspace{-2pt} K_{\epsilon,\kappa,h}(x_{\D},y) (e_i + \nablay w_{\kappa,h}^i(x_{\mathcal{D}},y))\cdot  (e_j + \nablay w_{\kappa,h}^j(x_{\mathcal{D}},y)) \hspace{2pt} dy.
\end{align*}
\end{definition}
Note that the above definition requires a recomputation of the cell basis $(w_{\kappa,h}^i(x_{\mathcal{D}},\cdot))_i$ for each time step if the coarse grid $\Tau_H^n(\Omega)$ changes in time or if it is adaptively refined. Alternatively, in an offline phase, we might compute $w_{\kappa,h}^i(x,\cdot) \in W_h(Y)$ for a large set of $x$-samples $\mathcal{S}\subset \Omega$ that can be reused for any time step. Defining
\begin{align*}
\left(K^0_{\epsilon,\kappa,h}\right)_{ij}(x) := \int_{Y} K_{\epsilon,\kappa,h}(x,y) (e_i + \nablay w_{\kappa,h}^i(x,y))\cdot  (e_j + \nablay w_{\kappa,h}^j(x,y)) \hspace{2pt} dy
\end{align*}
for all $x \in \mathcal{S}$, we may also use a continuous interpolation of the sample values.

$\\$
With the previously defined dual meshes $\mathcal{D}_H^n$, we may use a vertex centered finite volume discretization on the coarse scale (c.f. \cite{Cances:Pop:Vohralik:2012}). The first version of a fully discrete multiscale method is based on the Kirchhoff transformed equation given by Definition \ref{new-fully-continuous-hmm}. 
\begin{definition}[Fully discrete HMM for the Kirchhoff transformed system]
\label{def-fd-hmm-kirchhoff}
Let assumption (A14) be fulfilled. We call $(S_{H\tau},P_{H\tau}) \in V_{H,\tau;\bar{S}}(\Omega_T) \times V_{H,\tau;\bar{P}}(\Omega_T)$ a fully discrete HMM approximation for the Kirchhoff transformed system if there holds
\begin{eqnarray}
\label{fd-hmm-kirchhoff-1}\lefteqn{\int_{\D} \Phi^0 \left( \frac{S_{H\tau}^n - S_{H\tau}^{n-1}}{\triangle t^n} \right) \hspace{2pt} dx}\\
\nonumber&-& \int_{\partial \D} K^0_{\epsilon,\kappa,h}\left( (\lambda_{w}(S_{H\tau}^n) \nabla P_{H\tau}^n + \nabla \Upsilon(S_{H\tau}^n) - \lambda_w(S_{H\tau}^n) \rho_w g) \cdot \nu_{\D}
\right) \hspace{2pt} d \sigma(x) = 0
\end{eqnarray}
and
\begin{align}
\label{fd-hmm-kirchhoff-2}- \int_{\partial \D} K^0_{\epsilon,\kappa,h} ( \lambda(S_{H\tau}^n) \nabla P_{H\tau}^n - (\lambda_w(S_{H\tau}^n) \rho_w + \lambda_n(S_{H\tau}^n) \rho_n) g ) \cdot \nu_{\D}
\hspace{2pt} d \sigma(x) = 0
\end{align}
for all $1 \le n \le N$ and all $\D \in\mathcal{D}_H^{\mbox{\tiny int},n} \subset \mathcal{D}_H^n$. Here, $P_{H\tau}^0$ is the solution of (\ref{fd-hmm-kirchhoff-2}) for given $S_{H\tau}^0$ (which is an approximation of the initial value $S_0$).
\end{definition}
Equations (\ref{fd-hmm-kirchhoff-1}), (\ref{fd-hmm-kirchhoff-2}) form a nonlinear system that might be solved using Newton's method or a fixed point linearization as proposed e.g. in \cite{Cances:Pop:Vohralik:2012}.
Alternatively to the formulation proposed in Definition \ref{def-fd-hmm-kirchhoff}, we might also construct a scheme with unknown $\Upsilon_{H \tau}$ which is an approximation of the Kirchhoff transformed saturation  $\Upsilon(S^c)$. In this case, we only need to replace $S_{H \tau}$ by $\Upsilon^{-1}(\Upsilon_{H \tau})$ in (\ref{fd-hmm-kirchhoff-1})-(\ref{fd-hmm-kirchhoff-2}). The advantage is that we seek an approximation of the unknown $\Upsilon(S^c)$ which has typically more regularity then the non-wetting saturation $S^c$. Again, we refer to \cite{Cances:Pop:Vohralik:2012} for the formulation of such a scheme.

$\\$
The next method is based on the fully continuous HMM for the original system as stated in Definition \ref{hmm-original-system}. The mass balance is solved for both phases and an upwinding term is used for stabilization (c.f. \cite{Cances:Pop:Vohralik:2012,Huber:Helmig:2000}). 
\begin{definition}[Fully discrete HMM for the original system]
\label{def-fd-hmm-original-system}
Let assumption (A14) be fulfilled. According to (\ref{global-pressure-def}), we define the function $P$ that describes the global pressure relation by
\begin{align}
\label{pressure-relation}P(p,s) := p + \int_0^{s} \frac{\lambda_n(\theta)}{\lambda(\theta)} P_c^{\prime}(\theta) \hspace{2pt} d\theta.
\end{align}
Now, we seek $(s_{H \tau; w},p_{H \tau; w}) \in V_{H,\tau;\bar{S}}(\Omega_T) \times V_{H,\tau}(\Omega_T)$ (approximating the functions $s_{w}^0$ and $p_{w}^0$ from Definition \ref{hmm-original-system}) with the properties $P(p_{H \tau; w},s_{H \tau; w})=\bar{P}$ on $\partial \Omega \times (0,T]$ and $s_{H \tau; w}^0=S_{H\tau}^0$, solving
\begin{align}
\label{fd-hmm-1}&\int_{\D} \Phi^0 \left( \frac{s_{H \tau; w}^{n} - s_{H \tau; w}^{n-1}}{\triangle t^n} \right) \hspace{2pt} dx\\
\nonumber&\qquad = \int_{\partial \D} \left[ K^0_{\epsilon,\kappa,h}  \lambda_{w}(s_{H \tau; w}^{n}) \left( \nabla p_{H \tau; w} - \rho_w g \right) \right]^{\mbox{\rm upw}} \cdot \nu_{\D} \hspace{2pt} d \sigma(x) \quad \mbox{and}\\
\label{fd-hmm-2}-&\int_{\D} \Phi^0 \left( \frac{s_{H \tau; w}^{n} - s_{H \tau; w}^{n-1}}{\triangle t^n} \right) \hspace{2pt} dx\\
\nonumber&\qquad= \int_{\partial \D} \left[ K^0_{\epsilon,\kappa,h}  \lambda_{n}(s_{H \tau; w}^{n}) \left( \nabla p_{H \tau; w} + \nabla P_c(s_{H \tau; w}^{n})  - \rho_n g \right) \right]^{\mbox{\rm upw}} \cdot \nu_{\D} \hspace{2pt} d \sigma(x)
\end{align}
for $1 \le n \le N$ and for all $\D \in \mathcal{D}_H^{\mbox{\tiny int},n}$, 
where $[\cdot]^{\mbox{\rm upw}}$ denotes an upwind choice of the evaluation on the element faces.
The non-wetting saturation and the non-wetting pressure are obtained using
\begin{align*}
s_{H \tau; n} := 1 - s_{H \tau; w} \quad \mbox{and} \quad p_{H \tau; n}:=p_{H \tau; w}+P_c(s_{H \tau; w}).
\end{align*}
\end{definition}

\subsection{A posteriori error estimates}

In this section, we present an a posteriori estimate for the error between an arbitrary fully discrete HMM approximation and a homogenized solution that corresponds with problem (\ref{weak-form-1})-(\ref{weak-form-2}).

As in \cite{Cances:Pop:Vohralik:2012}, we start with defining a {\it nonwetting phase flux reconstruction} $u_{s,H\tau}$ and a {\it total flux reconstruction} $u_{p,H\tau}$. We use the subindex $s$ for the flux of the saturation equation and the subindex $p$ for the flux of the pressure equation. The next assumption guarantees existence of $u_{s,H\tau}$ and $u_{p,H\tau}$.
\begin{assumption}[For a-posteriori error estimation]
We make the following assumption:
\begin{enumerate}
\item[(A15)] there exist locally conservative flux reconstructions, i.e. there exist two vector fields $u_{s,H\tau}$ and $u_{p,H\tau}$ that are piecewise constant in time and such that
\begin{align*}
u_{s,H}^n:= (u_{s,H\tau}){|I_n}, \enspace u_{p,H}^n:= (u_{p,H\tau}){|I_n} \in \mathbf{H}(\mbox{\rm div},\Omega) \quad \forall 1 \le n \le N
\end{align*}
and
\begin{align*}
\int_{\D} \frac{S_{H\tau}^n - S_{H\tau}^{n-1}}{\triangle t^n} + \nabla \cdot u_{s,H}^n &= 0 \qquad \forall \D \in \mathcal{D}_H^{\mbox{\tiny int},n}, \enspace \forall n \in \{1,...,N\},\\
\int_{\D} \nabla \cdot u_{p,H}^n &= 0 \qquad \forall \D \in \mathcal{D}_H^{\mbox{\tiny int},n}, \enspace \forall n \in \{1,...,N\}.
\end{align*}
\end{enumerate}
\end{assumption}
Even though the existence of $u_{p,H}^n$ is clear in any case ($u_{s,H}^n=0$ obviously fulfills the property), $0$ is not the flux reconstruction that we are interested in. We want $u_{s,H}^n$ and $u_{p,H}^n$ to be discrete approximations of the exact fluxes that occur in the fully continuous HMM, i.e. $u_{s,H\tau}$ and $u_{p,H\tau}$ are to approximate $u_{s}^0$ and $u_{p}^0$ with
\begin{align*}
u_{s}^0&:= - K^0_{\epsilon,\kappa} (\lambda_{w}(S^c) \nabla P^c + \nabla \Upsilon(S^c) - \lambda_w(S^c) \rho_w g) \enspace \mbox{and}\\
u_{p}^0&:= - K^0_{\epsilon,\kappa} ( \lambda(S^c) \nabla P^c - (\lambda_w(S^c) \rho_w + \lambda_n(S^c) \rho_n) g ) ).
\end{align*}

\begin{remark}[Reconstruction of the fluxes]
A procedure to obtain flux reconstructions for a system of type (\ref{fd-hmm-kirchhoff-1})-(\ref{fd-hmm-kirchhoff-2}) as well as for a system of type (\ref{fd-hmm-1})-(\ref{fd-hmm-2}) in a suitable Raviart-Thomas-N\'{e}d\'{e}lec space is described in \cite[Section 4.2.2 and 4.3.2]{Cances:Pop:Vohralik:2012}. In particular, assumption (A15) is fulfilled for both reconstruction schemes.
\end{remark}

\begin{definition}[Error indicators]
\label{def-error-indicators}
In the following estimators, $\alpha_{\mathcal{D},\kappa}$ denotes the smallest and $\beta_{\mathcal{D},\kappa}$ the largest eigenvalue of $K^{\epsilon}(x_{\D}+ \kappa y)$ in $Y$ (both values can be e.g. determined by solving corresponding eigenvalue cell problems in $\tilde{H}^1_{\sharp}(Y))$. $\alpha$ is the smallest eigenvalue of $K^{\epsilon}$. By $C_P(\D)$ we denote constants that arise from a Poincar\'{e} inequality. For more details on these constants we refer to 
the proof of Theorem \ref{a-posteriori-main-result}.
Let us define the flow functions $V_{H\tau}^s(t)$ and $V_{H\tau}^p(t)$ by
\begin{align*}
V_{H\tau}^s(t)&:=(\lambda_{w}(S_{H\tau}(\cdot,t)) \nabla P_{H\tau}(\cdot,t) + \nabla \Upsilon(S_{H\tau}(\cdot,t)) - \lambda_w(S_{H\tau}(\cdot,t)) \rho_w g),\\
V_{H\tau}^p(t)&:=  \lambda(S_{H\tau}(\cdot,t)) \nabla P_{H\tau}(\cdot,t) - (\lambda_w(S_{H\tau}(\cdot,t)) \rho_w + \lambda_n(S_{H\tau}(\cdot,t)) \rho_n) g.
\end{align*}
Using these flow functions, we define the {\it coarse scale residual estimators} by
\begin{align*}
\eta_{CR,s,\mathcal{D}}^n&:= C_P(\D) H_{\D} \alpha^{-\frac{1}{2}} \|\partial_t S_{H\tau} + \nabla \cdot u_{s,H}^n\|_{L^2(\D)},\\
\eta_{CR,p,\mathcal{D}}^n&:= C_P(\D) H_{\D} \alpha^{-\frac{1}{2}} \| \nabla \cdot u_{p,H}^n\|_{L^2(\D)},
\end{align*}
and the {\it fine scale residual estimators} by
\begin{align*}
\eta_{CF,s,\mathcal{D}}^n(t)&:= \alpha^{-\frac{1}{2}} \frac{\beta_{\mathcal{D},\kappa}}{\alpha_{\mathcal{D},\kappa}} \|V_{H\tau}^s(t)\|_{L^2(\D)} m(\D,\epsilon,\kappa,h),\\
\eta_{CF,p,\mathcal{D}}^n(t)&:= \alpha^{-\frac{1}{2}} \frac{\beta_{\mathcal{D},\kappa}}{\alpha_{\mathcal{D},\kappa}} \|V_{H\tau}^p(t)\|_{L^2(\D)} m(\D,\epsilon,\kappa,h),
\end{align*}
where
\begin{align*}
m(\D,\epsilon,\kappa,h):= \left( \sum_{E \in \Gamma(\Tauh(Y))} h_E \| [K_{\epsilon,\kappa,h}(x_{\D},\cdot) (e_i + \nablay w_{\kappa,h}^i(x_{\D},\cdot))]_{E} \|_{L^2(E)}^2 \right)^{\frac{1}{2}}.
\end{align*}
The {\it diffusive flux estimators} are given by
\begin{align*}
\eta_{DF,s,\mathcal{D}}^n(t)&:= \alpha^{-\frac{1}{2}} \|K^0_{\epsilon,\kappa,h} V_{H\tau}^s(t) + u_{s,H}^n\|_{L^{2} (\D)},\\
\eta_{DF,p,\mathcal{D}}^n(t)&:= \alpha^{-\frac{1}{2}} \|K^0_{\epsilon,\kappa,h} V_{H\tau}^p(t) + u_{s,H}^n\|_{L^{2} (\D)}
\end{align*}
and the {\it approximation error estimators} by
\begin{align*}
&\eta_{APP,s,\mathcal{D}}^n(t) :=\alpha^{-\frac{1}{2}} \frac{\beta_{\mathcal{D},\kappa}}{\alpha_{\mathcal{D},\kappa}} \|V_{H\tau}^s(t)\|_{L^2(\D)} \\
&\hspace{70pt} \cdot \sup_{x \in \D} \|(K_{\epsilon,\kappa}(x,\cdot)-K_{\epsilon,\kappa,h}(x_{\D},\cdot)) (e_i + \nablay w_{\kappa,h}^i(x_{\D},\cdot))\|_{L^2(Y)} \\
&\eta_{APP,p,\mathcal{D}}^n(t):= :=\alpha^{-\frac{1}{2}} \frac{\beta_{\mathcal{D},\kappa}}{\alpha_{\mathcal{D},\kappa}} \|V_{H\tau}^p(t)\|_{L^2(\D)} \\
&\hspace{70pt} \cdot \sup_{x \in \D} \|(K_{\epsilon,\kappa}(x,\cdot)-K_{\epsilon,\kappa,h}(x_{\D},\cdot)) (e_i + \nablay w_{\kappa,h}^i(x_{\D},\cdot))\|_{L^2(Y)}.
\end{align*}
\end{definition}
Let $K^0$ denote some homogenized matrix that we specify in the Theorem \ref{a-posteriori-main-result} and Corollary \ref{a-posteriori-main-result-periodic} below. In the following, by $\|\cdot\|_{E(\Omega)}$ and $\|\cdot\|_{E(\Omega_T)}$ we define energy norms on $H^1_0(\Omega)$ and respectively on $L^2((0,T),H^1_0(\Omega))$ by:
\begin{align*}
\|\Psi\|_{E(\Omega)}&:= \left( \int_{\Omega} K^0(x) \nabla \Psi(x) \cdot \nabla \Psi(x) \hspace{2pt} dx \right)^{\frac{1}{2}} \quad \mbox{and}\\
\|\Psi\|_{E(\Omega_T)}&:= \left( \int_{0}^T \int_{\Omega} K^0(x) \nabla \Psi(x,t) \cdot \nabla \Psi(x,t) \hspace{2pt} dx \hspace{2pt} dt \right)^{\frac{1}{2}}.
\end{align*}
Furthermore, for corresponding functionals $F$, we denote the induced norms on the associated dual spaces by:
\begin{align*}
||| F |||_{\Omega}&:= \underset{\Psi \in H^1_0(\Omega) \setminus\{0\}}{\mbox{\rm sup}}\frac{|F(\Psi)|}{\|\Psi\|_{E(\Omega)}} \quad \mbox{and}\\
||| F |||_{\Omega_T}&:= \underset{\Psi \in L^2((0,T),H^1_0(\Omega)) \setminus\{0\}}{\mbox{\rm sup}}\frac{|F(\Psi)|}{\|\Psi\|_{E(\Omega_T)}}.
\end{align*}

\begin{assumption}[General homogenization setting]
We make the following assumption:
\begin{enumerate}
\item[(A16)] $K^{\epsilon}$ is $G$-convergent to $K^0$, $\Phi^{\epsilon}$ is weak-$\ast$ convergent to $\Phi^0\in(0,1]$ in $L^{\infty}(\Omega)$, $S^{\epsilon}$ converges to $S^0$ strongly in $L^2(\Omega_T)$, $P^{\epsilon}$ converges to $P^0$ weakly in $L^2(\Omega_T)$ and $(S^0,P^0) \in \Epsilon$ with $S^0(\cdot,0)=S_0$ is the unique tuple which fulfills for all $\Psi \in L^2((0,T),H^1_0(\Omega))$
\begin{eqnarray}
\label{weak-hom-form-1}\lefteqn{\int_{0}^T \langle \Phi^0 \partial_t S^0(\cdot,t), \Psi(\cdot,t ) \rangle_{H^{-1}(\Omega),H^1_0(\Omega)} \hspace{2pt} dt}\\
\nonumber&=& - \int_{\Omega_T} K^0 (\lambda_{w}(S^0) \nabla P^0 + \nabla \Upsilon(S^0) - \lambda_w(S^0) \rho_w g) \cdot \nabla \Psi  \hspace{2pt} dt  \hspace{2pt} dx\\
\label{weak-hom-form-2}0 &=& \int_{\Omega_T} K^0 ( \lambda(S^0) \nabla P^0 - (\lambda_w(S^0) \rho_w + \lambda_n(S^0) \rho_n) g ) ) \cdot \nabla \Psi \hspace{2pt} dt  \hspace{2pt} dx.
\end{eqnarray}
\end{enumerate}
\end{assumption}

Before stating the final estimate, we require an additional indicator for the modeling error, i.e. the contribution that describes the error between the real homogenized matrix $K^0$ and the used effective intrinsic permeability $K^0_{\epsilon,\kappa}$ from the fully continuous HMM formulation.
\begin{definition}[Modeling error estimators]
Let $V_{H\tau}^s$ and $V_{H\tau}^p$ be given as in Definition \ref{def-error-indicators}. We define the {\it modeling error estimators} by
\begin{align*}
\eta_{MOD,s,\mathcal{D}}^n(t)&:=\alpha^{-\frac{1}{2}} \|(K^0- K^0_{\epsilon,\kappa}) V_{H\tau}^s(t)\|_{L^{2}(\D)} \\
\eta_{MOD,p,\mathcal{D}}^n(t)&:=\alpha^{-\frac{1}{2}} \|(K^0- K^0_{\epsilon,\kappa}) V_{H\tau}^p(t)\|_{L^{2}(\D)}.
\end{align*}
Note that the modeling error estimators can only be computed in special cases, since the homogenized matrix $K^0$ is typically unknown. In the case of a periodic structure as in Corollary \ref{a-posteriori-main-result-periodic} the modeling error is equal to zero (see Corollary \ref{modeling-error-corollary}). In other cases we need explicit knowledge about the type of the heterogeneities of $K^{\epsilon}$ in order to estimate the modeling error further.
\end{definition}

We are finally prepared to formulate the final a posteriori error estimate.

\begin{theorem}[A posteriori error estimate in the non-periodic case]
\label{a-posteriori-main-result}
Let $(S_{H\tau},P_{H\tau}) \in V_{H,\tau;\bar{S}}(\Omega_T) \times V_{H,\tau;\bar{P}}(\Omega_T)$ denote an arbitrary approximation of $(S^c,P^c)$ and let assumptions (A1)-(A9) and (A14)-(A16) be fulfilled. 
The energy norms $\|\cdot\|_{E(\Omega)}$ and $\|\cdot\|_{E(\Omega_T)}$ are defined with the homogenized matrix $K^0$. Then there holds the following a posteriori error estimate
\begin{eqnarray}
\nonumber\lefteqn{ ||| S_{H\tau} - S^0 |||^2_{\Omega_T} + \| P_{H\tau} -  P^0 \|_{E(\Omega_T)}^2 + \| \Upsilon(S_{H\tau}) - \Upsilon(S^0) \|_{L^2(\Omega_T)}^2}\\
\label{equation-a-post}&\lesssim& |||S_{H\tau}(\cdot,0) - S_0 |||^2_{\Omega} + \sum_{\alpha=s,p} \sum_{n=1}^N \sum_{\mathcal{D} \in \mathcal{D}_H^{n}} \int_{I_n} \eta_{MOD,\alpha,\mathcal{D}}^n(t)^2 \\
\nonumber&\enspace&+
\sum_{\alpha=s,p} \sum_{n=1}^N \sum_{\mathcal{D} \in \mathcal{D}_H^{n}} \int_{I_n} \left( \eta_{CR,\alpha,\mathcal{D}}^n + (\eta_{CF,\alpha,\mathcal{D}}^n + \eta_{DF,\alpha,\mathcal{D}}^n + \eta_{APP,\alpha,\mathcal{D}}^n)(t) \right)^2.
\end{eqnarray}
\end{theorem}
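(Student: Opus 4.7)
The plan is to adapt the a posteriori analysis of Cancès--Pop--Vohralík \cite{Cances:Pop:Vohralik:2012} to the HMM setting, in which the principal new features are (i) the gap between the homogenized matrix $K^0$ appearing in the energy norm and the effective matrix $K^0_{\epsilon,\kappa}$ driving the HMM, and (ii) the fact that $K^0_{\epsilon,\kappa}$ is itself only accessible through a finite-element approximation of cell problems. Throughout I would use the two weak equations (\ref{weak-hom-form-1})--(\ref{weak-hom-form-2}) satisfied exactly by $(S^0,P^0)$, test them against $\Psi^p := P_{H\tau}-P^0 \in L^2((0,T),H^1_0(\Omega))$ and $\Psi^s := \Upsilon(S_{H\tau})-\Upsilon(S^0)$, and contrast the resulting identities with quantities computable from the discrete solution and its flux reconstructions $u_{s,H}^n,u_{p,H}^n$ provided by assumption (A15).

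The first step is the standard parabolic/elliptic decoupling. For the pressure equation, write $\int_{\Omega}K^0\nabla\Psi^p\cdot\nabla\Psi^p = \int_\Omega K^0 V^p_{H\tau}\cdot\nabla\Psi^p - \int_\Omega K^0\nabla P^0\cdot\nabla\Psi^p$ up to nonlinear mobility rearrangements, and insert $\pm u^n_{p,H}$ and $\pm K^0_{\epsilon,\kappa}V^p_{H\tau}$ and $\pm K^0_{\epsilon,\kappa,h}V^p_{H\tau}$. This produces four contributions: $(K^0-K^0_{\epsilon,\kappa})V^p_{H\tau}$ gives $\eta_{MOD,p}$; $(K^0_{\epsilon,\kappa}-K^0_{\epsilon,\kappa,h})V^p_{H\tau}$ I would bound by a standard residual-type a posteriori estimate for the discretized cell problems on $Y$, which after splitting into interior jump residuals and coefficient approximation yields precisely $\eta_{CF,p}$ (the jump term weighted by $h_E$, using the Céa/efficiency machinery of Verfürth on $\tilde H^1_\sharp(Y)$) plus $\eta_{APP,p}$ (the term measuring $K_{\epsilon,\kappa}-K_{\epsilon,\kappa,h}$ acting on the reconstructed gradient); the term $K^0_{\epsilon,\kappa,h}V^p_{H\tau}+u^n_{p,H}$ is exactly $\eta_{DF,p}$; and the residual of $u^n_{p,H}$ against $\Psi^p$, after subtracting the mean on each dual cell and using $\nabla\cdot u^n_{p,H}=0$ on $\D$ from (A15), is controlled by a Poincaré inequality on each $\D$, producing $\eta_{CR,p}$. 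The factor $\beta_{\D,\kappa}/\alpha_{\D,\kappa}$ in $\eta_{CF,p}$ and $\eta_{APP,p}$ comes from bounding $\|\nabla_y w^i_{\kappa,h}\|$ via the spectral bounds of $K^\epsilon$ on the sampling cell.

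The saturation step is analogous but parabolic. Testing (\ref{weak-hom-form-1}) with $\Psi^s$ and integrating by parts in time produces the customary $\frac12\|\Upsilon(S_{H\tau})-\Upsilon(S^0)\|^2$-type controls and a time boundary term that reduces to $|||S_{H\tau}(\cdot,0)-S_0|||_\Omega^2$; for the elliptic operator inside, repeat the five-term decomposition above, obtaining the $s$-indexed versions of $\eta_{MOD},\eta_{CF},\eta_{APP},\eta_{DF},\eta_{CR}$. The cross-coupling via $\lambda_w(S_{H\tau})\nabla P_{H\tau}-\lambda_w(S^0)\nabla P^0$ is split as $\lambda_w(S_{H\tau})(\nabla P_{H\tau}-\nabla P^0)+(\lambda_w(S_{H\tau})-\lambda_w(S^0))\nabla P^0$; the first piece is absorbed using $\|\lambda_w\|_\infty$ together with Young's inequality into $\|P_{H\tau}-P^0\|^2_{E(\Omega_T)}$, and the second is controlled by assumption (A8), which bounds $(\lambda_\alpha(S_{H\tau})-\lambda_\alpha(S^0))^2$ by $(S_{H\tau}-S^0)(\Upsilon(S_{H\tau})-\Upsilon(S^0))$; this crucial monotonicity converts a potentially dangerous nonlinear mismatch into a term absorbable into the left-hand side, using also $\|\nabla P^0\|_{L^\infty}$ from (A9) as the Lipschitz-type factor. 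Summation over $n$ and $\D$, a Gronwall step in time, and standard dual-norm manipulations for the $|||\cdot|||_{\Omega_T}$ contribution then produce (\ref{equation-a-post}).

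The main obstacle, in my view, is reconciling two a posteriori philosophies in one estimate: the coarse-scale residual/flux-reconstruction framework (which is driven by the discrete equations satisfied by $S_{H\tau},P_{H\tau}$ against the $G$-limit matrix $K^0$ in the energy norm) and the fine-scale residual framework for the cell problems (which controls $K^0_{\epsilon,\kappa}-K^0_{\epsilon,\kappa,h}$). The coupling must be arranged so that the coercivity constant $\alpha$ of $K^0$ (which appears in all the $\alpha^{-1/2}$ prefactors in Definition~\ref{def-error-indicators}) is used consistently to convert $L^2$-gradient estimates into energy-norm estimates, and so that the eigenvalue ratio $\beta_{\D,\kappa}/\alpha_{\D,\kappa}$ emerging from the cell-problem a posteriori analysis appears cleanly. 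A secondary subtlety is that $K^0$ is inaccessible in the nonperiodic setting, so the modeling indicators $\eta_{MOD,\alpha,\D}^n$ must appear as a separate, non-absorbable term on the right-hand side (they collapse to zero only under Corollary~\ref{modeling-error-corollary}). This is why the estimate is stated up to the modeling contribution rather than as a fully computable bound.
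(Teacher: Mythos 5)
Your proposal follows essentially the same route as the paper: the paper first bounds the error by the dual norms of pressure and saturation residuals (its Lemma~\ref{upper-bounds-on-error-by-resudals}, an adaptation of Theorem~5.7 of Canc\`es--Pop--Vohral\'{\i}k, which is exactly your energy-testing/Gronwall step with the nonlinearities handled through (A8)--(A9)), and then decomposes those residuals using the flux reconstructions of (A15) together with the chain of insertions $\pm u^n_{\alpha,H}$, $\pm K^0_{\epsilon,\kappa}V^{\alpha}_{H\tau}$, $\pm K^0_{\epsilon,\kappa,h}V^{\alpha}_{H\tau}$, where the cell-problem a posteriori bound for $\|K^0_{\epsilon,\kappa}-K^0_{\epsilon,\kappa,h}\|_{L^{\infty}(\D)}$ (its Lemma~\ref{estimate-fine-scale-error}) produces $\eta_{CF}$ and $\eta_{APP}$ precisely as you describe. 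The only cosmetic difference is that the paper formalizes the first step through explicit residual functionals $\mathcal{R}_p,\mathcal{R}_s$ and their dual norms rather than merging the two steps into a single testing argument.
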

Theorem \ref{a-posteriori-main-result} yields (localized) error indicators for each of the error contributions. We have indicators $\eta_{CR,\alpha,\mathcal{D}}^n$ for the coarse scale residual that depend on the diffusive flux reconstructions. The accuracy of the flux reconstruction itself is evaluated using the indicators $\eta_{DF,\alpha,\mathcal{D}}^n$. Residual error indicators for the solutions of the cell problems are given by $\eta_{CF,\alpha,\mathcal{D}}^n$. Finally, the error for replacing $K^{\epsilon}$ by a piecewise constant approximation is evaluated by the indicators given by $\eta_{APP,\alpha,\mathcal{D}}^n$. However, the modeling error part $\eta_{MOD,\alpha,\mathcal{D}}^n$ is typically unknown.

If we want to apply one of the proposed heterogeneous multiscale methods to a homogenization problem with unknown micro structure (i.e. unknown modeling error), we can still use (\ref{equation-a-post}) to evaluate the discretization error and to construct adaptive mesh refinement strategies. The (possibly not computable) modeling error contribution is a typical remainder that is due to the chosen method and cannot be reduced by changing the discretization (c.f. \cite{Abdulle:Nonnenmacher:2011,Henning:Ohlberger:2011_2}). It is a bounded term that typically converges to zero for $\epsilon$ converging to zero independently of $H$ and $\tau$ (c.f. \cite{Gloria:2006,Gloria:2008}). In particular, if we replace $S^0$ and $P^0$ by $S^c$ and $P^c$ on the left side of (\ref{equation-a-post}), the modeling error contributions on the right side vanish and we get a fully computable a-posteriori error estimator for the discretization error.

For the case of a periodic we do not encounter a problem with the modeling error. Using Corollary \ref{modeling-error-corollary} we can formulate the following improved error estimate.

\begin{corollary}[A-posteriori error estimate in the periodic case]
\label{a-posteriori-main-result-periodic} 
Let assumptions (A1)-(A14) be fulfilled. In particular, we are in the case of periodic coefficient functions. Let $K^0$ denote the homogenized matrix stated in Theorem \ref{homogenization-result} (i.e. $K^0$ denotes the $G$-limit of $K^{\epsilon}$), $P^0$ the weak $L^2(\Omega_T)$-limit of $P^{\epsilon}$, $S^0$ the strong $L^2(\Omega_T)$-limit of $S^{\epsilon}$ and $\Phi^0=1$ (otherwise we rescale $K^0$ by dividing it by $\Phi^0>0$).
Then, for an arbitrary approximation $(S_{H\tau},P_{H\tau}) \in V_{H,\tau;\bar{S}}(\Omega_T) \times V_{H,\tau;\bar{P}}(\Omega_T)$ of $(S^c,P^c)$ fulfilling assumption (A15) there holds the following a posteriori error estimate
\begin{eqnarray}
\nonumber\lefteqn{ ||| S_{H\tau} - S^0 |||^2_{\Omega_T} + \| P_{H\tau} -  P^0 \|_{E(\Omega_T)}^2 + \| \Upsilon(S_{H\tau}) - \Upsilon(S^0) \|_{L^2(\Omega_T)}^2}\\
\label{equation-a-post-periodic}&\lesssim& |||S_{H\tau}(\cdot,0) - S_0 |||^2_{\Omega} \\
\nonumber&\enspace&+
\sum_{\alpha=s,p} \sum_{n=1}^N \sum_{\mathcal{D} \in \mathcal{D}_H^{n}} \int_{I_n} \left( \eta_{CR,\alpha,\mathcal{D}}^n + (\eta_{CF,\alpha,\mathcal{D}}^n + \eta_{DF,\alpha,\mathcal{D}}^n + \eta_{APP,\alpha,\mathcal{D}}^n)(t) \right)^2,
\end{eqnarray}
with a fully computable right hand side. Since $\Upsilon^{-1}\in C^{0,\theta}$ by assumption (A13), we can replace $\| \Upsilon(S_{H\tau}) - \Upsilon(S^0) \|_{L^2(\Omega_T)}^2$ by $\| S_{H\tau} - S^0 \|_{L^{1+\theta}(\Omega_T)}^{1+\theta}$.
\end{corollary}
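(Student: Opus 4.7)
The plan is to obtain Corollary \ref{a-posteriori-main-result-periodic} as a direct specialization of Theorem \ref{a-posteriori-main-result} to the periodic homogenization setting, exploiting that the modeling error estimators $\eta_{MOD,\alpha,\mathcal{D}}^n$ vanish identically here by Corollary \ref{modeling-error-corollary}.

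First, I would verify that the hypotheses of Theorem \ref{a-posteriori-main-result} are in force under the present assumptions. Assumptions (A1)--(A9), (A14) and (A15) are directly available. The general homogenization hypothesis (A16) has to be established from the periodic ones (A10)--(A13). For this I would invoke Theorem \ref{homogenization-result}, which produces (a subsequence with) $S^{\epsilon'} \to S^c$ strongly in $L^2(\Omega_T)$ and $P^{\epsilon'} \rightharpoonup P^c$ weakly in $L^2(\Omega_T)$, with $(S^c,P^c)$ solving the HMM system (\ref{weak-hmm-form-1})--(\ref{weak-hmm-form-2}) built from $K^0_{\epsilon,\kappa}$. Combining this with the ergodicity of $\Phi^{\epsilon}$ from (A3) (so that $\Phi^{\epsilon} \rightharpoonup^\ast \Phi^0$ in $L^{\infty}$, normalized to $\Phi^0=1$), and using Corollary \ref{modeling-error-corollary} to identify the $G$-limit $K^0$ with the explicit HMM coefficient $K^0_{\epsilon,\kappa}$, I can conclude $(S^0,P^0)=(S^c,P^c)$. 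The uniqueness of $(S^0,P^0)$ guaranteed by (A8)--(A9) lifts the subsequence convergence to the full family, so (A16) holds.

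Second, with the identity $K^0 = K^0_{\epsilon,\kappa}$ in hand, the modeling error estimators
\[
\eta_{MOD,\alpha,\mathcal{D}}^n(t) = \alpha^{-\tfrac{1}{2}} \|(K^0 - K^0_{\epsilon,\kappa}) V_{H\tau}^{\alpha}(t)\|_{L^2(\mathcal{D})}
\]
vanish pointwise for $\alpha\in\{s,p\}$, every $\mathcal{D}$, every $n$ and every $t \in I_n$. Substituting this into (\ref{equation-a-post}) immediately yields (\ref{equation-a-post-periodic}), whose right-hand side is now fully computable because it involves only the numerical solution, the discrete cell basis and the data.

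Third, I would deduce the alternative bound on the saturation error using (A13). Raising the pointwise Hölder inequality $|S_{H\tau}-S^0| \le C_{\Upsilon^{-1}}|\Upsilon(S_{H\tau})-\Upsilon(S^0)|^{\theta}$ to the power $1+\theta$ and integrating over $\Omega_T$ gives
\[
\|S_{H\tau}-S^0\|_{L^{1+\theta}(\Omega_T)}^{1+\theta}
\le C_{\Upsilon^{-1}}^{1+\theta}\int_{\Omega_T} |\Upsilon(S_{H\tau})-\Upsilon(S^0)|^{\theta(1+\theta)}\,dx\,dt .
\]
Since $\theta(1+\theta)\le 2$, Jensen's inequality on the finite-measure set $\Omega_T$ together with the uniform boundedness of $|\Upsilon(S_{H\tau})-\Upsilon(S^0)|$ (ensured by $0\le S\le 1$ and the Lipschitz continuity of $\Upsilon$ from (A4)) controls the right-hand side by a constant multiple of $\|\Upsilon(S_{H\tau})-\Upsilon(S^0)\|_{L^2(\Omega_T)}^{2}$, which is itself already bounded by the right-hand side of (\ref{equation-a-post-periodic}). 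This gives the desired replacement.

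The main obstacle, and really the only nontrivial step, is the identification of the abstract $G$-limit $K^0$ with the explicit effective coefficient $K^0_{\epsilon,\kappa}$ produced by the HMM. Once Corollary \ref{modeling-error-corollary} is invoked to perform this identification, everything else is a bookkeeping exercise: the cancellation of $\eta_{MOD}$ in (\ref{equation-a-post}) is immediate, and the passage from the Kirchhoff-transformed norm to the $L^{1+\theta}$ norm of the saturation error is a routine consequence of the Hölder continuity of $\Upsilon^{-1}$.
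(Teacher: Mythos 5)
Your overall route is the same as the paper's: the corollary is obtained by specializing Theorem \ref{a-posteriori-main-result} to the periodic setting, using Theorem \ref{homogenization-result} (plus uniqueness) to identify $(S^0,P^0)$ with $(S^c,P^c)$ and thereby verify (A16), and Corollary \ref{modeling-error-corollary} to conclude $K^0=K^0_{\epsilon,\kappa}$, so that the modeling estimators $\eta_{MOD,\alpha,\mathcal{D}}^n$ vanish identically and drop out of (\ref{equation-a-post}). This part of your argument is correct and is exactly the (very terse) argument the paper gives.

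There is, however, a genuine flaw in your justification of the final sentence, i.e.\ the replacement of $\| \Upsilon(S_{H\tau}) - \Upsilon(S^0) \|_{L^2(\Omega_T)}^2$ by $\| S_{H\tau} - S^0 \|_{L^{1+\theta}(\Omega_T)}^{1+\theta}$. After the pointwise H\"older bound you are left with $\int_{\Omega_T}|g|^{\theta(1+\theta)}$ for $g=\Upsilon(S_{H\tau})-\Upsilon(S^0)$ and $\theta(1+\theta)\le 2$, and you claim that Jensen's inequality together with the uniform bound $|g|\le M$ controls this by a constant multiple of $\int_{\Omega_T}|g|^2$. That inequality goes the wrong way: for $p<2$ and $|g|\le M$ one has $|g|^{p}=|g|^{2}|g|^{p-2}\ge M^{p-2}|g|^{2}$, and taking $g\equiv\delta$ constant gives $\int|g|^{p}/\int|g|^{2}=\delta^{p-2}\to\infty$ as $\delta\to 0$; the upper bound on $|g|$ is of no help because the obstruction lives at small values of $|g|$. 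What Jensen/H\"older actually yield is
\begin{equation*}
\int_{\Omega_T}|g|^{\theta(1+\theta)}\;\le\;|\Omega_T|^{\,1-\theta(1+\theta)/2}\left(\int_{\Omega_T}|g|^{2}\right)^{\theta(1+\theta)/2},
\end{equation*}
i.e.\ a bound by the estimator raised to the power $\theta(1+\theta)/2\le 1$, which in the relevant regime of a small estimator is strictly weaker than a bound by the estimator itself whenever $\theta<1$. Only for $\theta=1$ (Lipschitz $\Upsilon^{-1}$), where $1+\theta=2=\theta(1+\theta)$, does the pointwise inequality $|S_{H\tau}-S^0|^{2}\le C_{\Upsilon^{-1}}^{2}|\Upsilon(S_{H\tau})-\Upsilon(S^0)|^{2}$ give the replacement verbatim. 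For $\theta<1$ the honest conclusion of your computation is $\| S_{H\tau} - S^0 \|_{L^{1+\theta}(\Omega_T)}^{1+\theta}\lesssim(\text{right-hand side of }(\ref{equation-a-post-periodic}))^{\theta(1+\theta)/2}$, and you should either state the replacement in that weaker form or restrict it to the Lipschitz case.
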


\begin{conclusion}
Let assumptions (A1)-(A14) be fulfilled and let $(S_{H\tau},P_{H\tau}) \in V_{H,\tau;\bar{S}}(\Omega_T) \times V_{H,\tau;\bar{P}}(\Omega_T)$ denote the solution of the fully discrete heterogeneous multiscale method proposed in Definition \ref{def-fd-hmm-kirchhoff} or let $(s_{H \tau; w},p_{H \tau; w}) \in V_{H,\tau;\bar{S}}(\Omega_T) \times V_{H,\tau}(\Omega_T)$ denote the solution of the method stated in Definition \ref{def-fd-hmm-original-system}. Then, $(S_{H\tau},P_{H\tau})$ and respectively $(S_{H\tau},P_{H\tau}):=(s_{H \tau; w},P(p_{H \tau; w},s_{H \tau; w}))$ (with $P$ given by (\ref{pressure-relation})) fulfill the a posteriori error estimate (\ref{equation-a-post-periodic}).
\end{conclusion}

\begin{remark}[Efficiency]
Efficiency of the total estimated error (i.e. the term on the right hand side of (\ref{equation-a-post-periodic})) can be shown using the techniques from \cite{Cances:Pop:Vohralik:2012}, where a corresponding result is derived for a method of the same structure as the scheme in Definition \ref{def-fd-hmm-kirchhoff}. In particular, the sum of the estimators forms a lower bound for the residuals in the dual norm.
\end{remark}

\section{Proofs of the main results}
\label{section-proofs}

In this section we are concerned with the proofs of Theorems \ref{homogenization-result} and \ref{a-posteriori-main-result}. We do not prove Corollary \ref{a-posteriori-main-result-periodic} separately, since it is an easy conclusion from Theorem \ref{homogenization-result} and Theorem \ref{a-posteriori-main-result}.

\subsection{Proof of Theorem \ref{homogenization-result} (Convergence in the periodic setting)}

The first section is devoted to the homogenization of the Kirchhoff transformed two-phase flow equations under the assumption of periodicity.

The strategy is to derive the homogenized problem associated with the original weak problem (\ref{weak-form-1})-(\ref{weak-form-2}) under the assumptions (A1)-(A7) and (A10)-(A13). Then, we verify that the homogenized system is identical to the two-scale HMM given by (\ref{weak-hmm-form-1})-(\ref{weak-hmm-form-2}).

In the following, we make use of the tools derived in \cite{Amaziane:et-al:2010} for immiscible compressible two-phase flow in porous media. In particular, we need the following compactness Lemma obtained in \cite{Amaziane:et-al:2010}:
\begin{lemma}
\label{compactness-lemma-amaziane-et-al}
Let $\Phi \in L^{\infty}_{\sharp}(Y)$ with $\phi^*\le\Phi(y)\le\Phi^*$ for a.e. $y \in Y$ and $\phi^*,\Phi^*\in(0,1)$. Let $(v^{\epsilon})_{\epsilon>0} \subset L^2(\Omega_T)$ fulfill the following properties:
\begin{enumerate}
\item $0 \le v^{\epsilon} \le C$ a.e. in $\Omega_T$ and for all $\epsilon$.
\item There exists a function $\varpi$ with $\varpi(\xi)\rightarrow 0$ for $\xi \rightarrow 0$ such that the following inequality holds uniformly in $\epsilon$:
\begin{align*}
\int_{\Omega_T} |v^{\epsilon}(x+\triangle x,t)-v^{\epsilon}(x,t)|^2 \hspace{2pt} dx \hspace{2pt} dt \le C\varpi(|\triangle x|).
\end{align*}
\item The functions $v^{\epsilon}$ are such that for all $\epsilon$:
\begin{align*}
\left\| \Phi(\frac{\cdot}{\epsilon}) \partial_t v^{\epsilon} \right\|_{L^2((0,T),H^{-1}(\Omega))} \le C.
\end{align*}
\end{enumerate}
Then $(v^{\epsilon})_{\epsilon>0}$ is a precompact set in $L^2(\Omega_T)$.
\end{lemma}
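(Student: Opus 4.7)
The plan is to verify the Fr\'echet--Kolmogorov characterization of precompactness in $L^2(\Omega_T)$: uniform $L^2$-boundedness, uniform equicontinuity of translations in the spatial variable, and uniform equicontinuity of translations in the time variable. Condition~1 immediately yields boundedness (since $\Omega_T$ has finite measure), and condition~2 is literally the required equicontinuity in space. The bulk of the work therefore consists in deriving the time equicontinuity from condition~3, in spite of the rapidly oscillating weight.

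For this, fix $h \in (0,T)$ and set $w_h^\epsilon(x,t) := v^\epsilon(x,t+h) - v^\epsilon(x,t)$ on $\Omega \times (0,T-h)$. Using $\Phi \ge \phi^* > 0$ one has
\begin{align*}
\phi^* \int_0^{T-h}\!\!\int_\Omega |w_h^\epsilon|^2 \, dx\, dt
\;\le\; \int_0^{T-h}\!\!\int_\Omega \Phi\!\left(\tfrac{x}{\epsilon}\right) |w_h^\epsilon(x,t)|^2\, dx\, dt,
\end{align*}
and the right-hand side should be rewritten, via the fundamental theorem of calculus, as $\int_0^{T-h}\!\int_t^{t+h} \langle \Phi^\epsilon \partial_s v^\epsilon(\cdot,s),\, w_h^\epsilon(\cdot,t)\rangle_{H^{-1},H^1_0}\, ds\, dt$. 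Under condition~3 this would be controlled by $C h^{1/2}\,\|w_h^\epsilon\|_{L^2((0,T-h),H^1_0)}$, which would close the estimate. The hitch is that $w_h^\epsilon(\cdot,t)$ is a priori only in $L^\infty(\Omega)$, so the duality pairing is not yet admissible.

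To repair this I would regularize in space: take a symmetric mollifier $\rho_\delta$ of radius $\delta$, set $v^\epsilon_\delta := v^\epsilon * \rho_\delta$ (after suitable extension or a cutoff near $\partial \Omega$), and decompose
\begin{align*}
\|w_h^\epsilon\|_{L^2(\Omega\times(0,T-h))}
\;\lesssim\; \|v^\epsilon - v^\epsilon_\delta\|_{L^2(\Omega_T)} + \|v^\epsilon_\delta(\cdot,t+h) - v^\epsilon_\delta(\cdot,t)\|_{L^2(\Omega\times(0,T-h))}.
\end{align*}
The first term is bounded uniformly in $\epsilon$ by $C\,\varpi(\delta)^{1/2}$ by condition~2. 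For the second term, $v^\epsilon_\delta$ now lies in $L^\infty(0,T;H^1)$ with norm $\lesssim \delta^{-1}$, and mollification in $x$ commutes with $\partial_t$, so the duality argument sketched above can be carried out honestly and yields an estimate of the order $C(\delta)\, h$. Choosing first $\delta$ small (to dominate $\varpi(\delta)$) and then sending $h \to 0$ gives the uniform time equicontinuity, and Fr\'echet--Kolmogorov concludes.

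The main obstacle is precisely the interaction between the oscillating weight $\Phi(\cdot/\epsilon)$ in condition~3 and the absence of spatial regularity of $v^\epsilon$ stronger than the Kolmogorov modulus from condition~2. One cannot simply divide out $\Phi^\epsilon$, since $\psi/\Phi^\epsilon$ need not lie in $H^1_0(\Omega)$ when $\psi \in H^1_0(\Omega)$, so condition~3 is genuinely weaker than an unweighted $H^{-1}$-bound on $\partial_t v^\epsilon$. The mollification scheme above is exactly the device that lets the modulus $\varpi(\delta)$ from condition~2 absorb the diverging $H^1$-norm of $v^\epsilon_\delta$ and produce equicontinuity in time uniformly in $\epsilon$.
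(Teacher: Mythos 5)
The paper does not prove this lemma at all: it is imported verbatim from the reference [Amaziane, Antontsev, Pankratov, Piatnitski, \emph{Multiscale Model.\ Simul.}\ 8 (2010)], so there is no in-paper argument to compare against. Your sketch reconstructs what is essentially the standard proof of such weighted Aubin--Lions/Kolmogorov lemmas, and the strategy is sound: conditions 1 and 2 give boundedness and spatial equicontinuity, and the only real work is the uniform time-translate estimate, which must be extracted from the weighted bound in condition 3 by pairing $\Phi^{\epsilon}(v^{\epsilon}(\cdot,t+h)-v^{\epsilon}(\cdot,t))$ against a spatially mollified copy of itself, with the mollification error absorbed by the modulus $\varpi$ from condition 2. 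You also correctly identify the genuine obstruction (one cannot divide out $\Phi^{\epsilon}$ without destroying $H^1_0$-admissibility of the test function), which is exactly why the mollification device is needed.

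One small wrinkle in your literal decomposition: after splitting off $\|v^{\epsilon}-v^{\epsilon}_{\delta}\|$, the remaining term $\|v^{\epsilon}_{\delta}(\cdot,t+h)-v^{\epsilon}_{\delta}(\cdot,t)\|_{L^2}^2$ does not by itself contain the weight $\Phi^{\epsilon}$, so condition 3 cannot be applied to it directly; you must reinsert the weight (using $\phi^*\le\Phi^{\epsilon}$) and then commute the mollifier past $\Phi^{\epsilon}$, which produces one further error term of the form $\int\Phi^{\epsilon}\bigl(w_h^{\epsilon}*\rho_{\delta}-w_h^{\epsilon}\bigr)\psi$ --- again controlled by condition 2. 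A slightly cleaner organization is to estimate $\int_0^{T-h}\!\int_{\Omega}\Phi^{\epsilon}|w_h^{\epsilon}|^2$ directly, splitting the second factor $w_h^{\epsilon}$ into $(w_h^{\epsilon})_{\delta}+(w_h^{\epsilon}-(w_h^{\epsilon})_{\delta})$ and applying the duality pairing only to the mollified part; this yields the bound $C\varpi(\delta)^{1/2}+C h\,\delta^{-1}$ and the same conclusion. With that adjustment (and the routine care near $\partial\Omega$ and with the boundary strip in the Kolmogorov criterion, which your $L^{\infty}$ bound handles), the argument closes.
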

Again, following the analysis presented in \cite{Amaziane:et-al:2010}  for compressible two phase flow, the following estimates hold true and will allow to extract convergent subsequences of $P^{\epsilon}$, $\Upsilon^{\epsilon}$ and $S^{\epsilon}$:
\begin{lemma}
\label{a-priori-estimates-for-homogenization}
Let assumptions (A1)-(A13) be fulfilled and let $(S^{\epsilon},P^{\epsilon}) \in \Epsilon$ with $S^{\epsilon}(\cdot,0)=S_0$ denote the sequence of solutions of the Kirchhoff transformed two-phase flow system given by 
(\ref{weak-form-1})-(\ref{weak-form-2}). Then the following a priori estimates hold with an $\epsilon$-independent constants $C$
\begin{align*}
\| \nabla P^{\epsilon} \|_{L^2(\Omega_T)} \le C, \quad \| \nabla \Upsilon^{\epsilon} \|_{L^2(\Omega_T)} &\le C, \quad
\left\| \Phi^{\epsilon} \partial_t S^{\epsilon} \right\|_{L^2((0,T),H^{-1}(\Omega))} \le C,\\
{ \int_{\Omega_T} |S^{\epsilon}(x+\triangle x,t)-S^{\epsilon}(x,t)|^2 \hspace{2pt} dx \hspace{2pt} dt} &{\le C\varpi(|\triangle x|).}
\end{align*}
\end{lemma}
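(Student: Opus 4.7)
The plan is a standard energy-estimate argument for the Kirchhoff-transformed system, in the spirit of \cite{Amaziane:et-al:2010,Chen:2001}. All four bounds follow from testing the weak equations (\ref{weak-form-1}) and (\ref{weak-form-2}) with carefully chosen admissible functions and exploiting the uniform coercivity of $K^{\epsilon}$ from (A2), the bounds on $\lambda$ from (A1), and the monotonicity of the Kirchhoff transform in (A4).

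First I would bound $\nabla P^{\epsilon}$ in $L^2(\Omega_T)$. Since (\ref{weak-form-2}) is an elliptic equation for $P^{\epsilon}$ at almost every time, I would insert the test function $\Psi(\cdot,t):=P^{\epsilon}(\cdot,t)-\bar P(\cdot,t)\in H^1_0(\Omega)$ and integrate in time. Coercivity together with $\lambda(S^{\epsilon})\ge c_{\lambda}$ gives the coercive contribution $\alpha c_{\lambda}\|\nabla P^{\epsilon}\|_{L^2(\Omega_T)}^2$ on the left; on the right, the term containing $\nabla\bar P$ and the gravity term are absorbed via Young's inequality using the $L^{\infty}$-bounds on $\lambda_w,\lambda_n$ and assumption (A5).

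Next I would bound $\nabla\Upsilon^{\epsilon}$ by testing (\ref{weak-form-1}) with $\Psi:=\Upsilon^{\epsilon}-\Upsilon(\bar S)\in L^2((0,T),H^1_0(\Omega))$. Introducing the convex primitive $\mathcal{U}(s):=\int_0^s\Upsilon(\xi)\,d\xi$ and exploiting that by (A12) both $\Phi^{\epsilon}$ and $\bar S$ are time-independent, the Alt--Luckhaus chain rule for the time-derivative term yields
\begin{equation*}
\int_{\Omega}\Phi^{\epsilon}\bigl(\mathcal{U}(S^{\epsilon}(T))-\mathcal{U}(S_0)\bigr)\,dx\;-\;\int_{\Omega}\Phi^{\epsilon}\,\Upsilon(\bar S)\bigl(S^{\epsilon}(T)-S_0\bigr)\,dx,
\end{equation*}
which is uniformly bounded since $0\le S^{\epsilon}\le 1$ and $\Upsilon$ is continuous on $[0,1]$. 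The coercive contribution $\alpha\|\nabla\Upsilon^{\epsilon}\|_{L^2(\Omega_T)}^2$ dominates the spatial side; the cross terms containing $\nabla P^{\epsilon}$ and $\nabla\Upsilon(\bar S)$ are absorbed via Young's inequality, using the previous step together with $\Upsilon(\bar S)\in L^2((0,T),H^1(\Omega))$, which follows from $G_{\alpha}(\bar S)\in H^1(\Omega)$ in (A12). From the two bounds already obtained, the estimate on $\|\Phi^{\epsilon}\partial_t S^{\epsilon}\|_{L^2((0,T),H^{-1}(\Omega))}$ is then immediate: test (\ref{weak-form-1}) against arbitrary $\Psi\in L^2((0,T),H^1_0(\Omega))$ and apply Cauchy--Schwarz.

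The expected main difficulty is the space-translation estimate on $S^{\epsilon}$, since in this degenerate setting no uniform $H^1$-bound on $S^{\epsilon}$ itself is available. The strategy is to first establish the standard translation inequality
\begin{equation*}
\int_{\Omega_T}\bigl|\Upsilon^{\epsilon}(x+\triangle x,t)-\Upsilon^{\epsilon}(x,t)\bigr|^2\,dx\,dt\;\le\;|\triangle x|^2\,\|\nabla\Upsilon^{\epsilon}\|_{L^2(\Omega_T)}^2,
\end{equation*}
after extending $\Upsilon^{\epsilon}$ outside $\Omega$ by the boundary datum $\Upsilon(\bar S)$ and handling a boundary strip of width $|\triangle x|$ separately, and then to transfer the bound to $S^{\epsilon}$ via the $\theta$-H\"older continuity of $\Upsilon^{-1}$ from (A13) combined with H\"older's inequality in space-time. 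This produces the claimed modulus of continuity $\varpi(\xi):=C\xi^{2\theta}$, which is precisely the ingredient required to invoke Lemma \ref{compactness-lemma-amaziane-et-al} afterwards.
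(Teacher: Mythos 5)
Your proposal is correct and follows essentially the route the paper intends: the paper gives no proof of this lemma and simply defers to the energy estimates of Amaziane et al., and your sketch (testing the pressure equation with $P^{\epsilon}-\bar P$, the saturation equation with $\Upsilon^{\epsilon}-\Upsilon(\bar S)$ via the Alt--Luckhaus chain rule, reading off the $H^{-1}$ bound from the equation, and transferring the translation estimate from $\Upsilon^{\epsilon}$ to $S^{\epsilon}$ through the H\"older continuity of $\Upsilon^{-1}$ in (A13), giving $\varpi(\xi)=C\xi^{2\theta}$) is exactly that standard argument. No gaps.
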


We are now prepared to formulate and prove the first convergence result in two-scale homogenized form. 
\begin{theorem}[Two-scale homogenized system]
\label{two-scale-homogenized-system}Let assumptions (A1)-(A7) and (A10)-(A13) be fulfilled. Then the two-scale homogenized system associated with (\ref{weak-hmm-form-1}) and (\ref{weak-hmm-form-2}) reads: find $(S^0,P^0) \in \Epsilon$ and the correctors ${\Upsilon}^1\in L^2((0,T) \times \Omega,\tilde{H}^1_{\sharp}(Y))$ and ${P}^1 \in L^2((0,T) \times \Omega,\tilde{H}^1_{\sharp}(Y))$ such that
\begin{eqnarray*}
\label{tsh-saturation}\nonumber\lefteqn{\int_{0}^T \langle \Phi^0 \partial_t S^0(\cdot,t), \Psi(\cdot,t ) \rangle_{H^{-1}(\Omega),H^1_0(\Omega)} \hspace{2pt} dt}\\
&=& - \int_{\Omega_T} \int_Y K (\lambda_{w}(S^0) (\nablax P^0 + \nablay P^1)) \cdot (\nablax \Psi + \nablay \psi)\\
\nonumber&\enspace& - \int_{\Omega_T} \int_Y K ((\nablax \Upsilon(S^0) + \nablay {\Upsilon}^1) - \lambda_w(S^0) \rho_w g) \cdot (\nablax \Psi + \nablay \psi)
\end{eqnarray*}
and
\begin{eqnarray}
\label{tsh-pressure}0 = \int_{\Omega_T} \int_Y K ( \lambda(S^0) (\nablax P^0 + \nablay P^1) - (\lambda_w(S^0) \rho_w + \lambda_n(S^0) \rho_n) g ) ) \cdot \left( \nablax \Psi + \nablay \psi \right)
\end{eqnarray}
for all $\Psi \in L^2((0,T),H^1_0(\Omega))$ and $\psi \in L^2(\Omega_T,H^1_{\sharp}(Y))$. 
\end{theorem}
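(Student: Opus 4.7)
The plan is to follow the Nguetseng--Allaire two-scale convergence framework, adapted to degenerate two-phase flow as in Amaziane et al., and apply it directly to the Kirchhoff-transformed weak system (\ref{weak-form-1})--(\ref{weak-form-2}). First, I would collect the a priori bounds of Lemma \ref{a-priori-estimates-for-homogenization}: $P^\epsilon-\bar P$ and $\Upsilon^\epsilon-\Upsilon(\bar S)$ bounded in $L^2((0,T),H^1_0(\Omega))$, $\Phi^\epsilon \partial_t S^\epsilon$ bounded in $L^2((0,T),H^{-1}(\Omega))$, and the uniform spatial translation estimate on $S^\epsilon$. Invoking Lemma \ref{compactness-lemma-amaziane-et-al} with $\Phi$ from (A10), I extract a subsequence so that $S^\epsilon \to S^0$ strongly in $L^2(\Omega_T)$. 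By continuity of $\lambda_\alpha$ (A1) and Lipschitz continuity of $\Upsilon$ (A4), this upgrades to strong $L^p(\Omega_T)$-convergence, $p<\infty$, of $\lambda_\alpha(S^\epsilon)$, $\lambda(S^\epsilon)$ and $\Upsilon(S^\epsilon)$ to the corresponding functions of $S^0$.

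Next, I would apply the two-scale compactness theorem to $\nabla P^\epsilon$ and $\nabla \Upsilon^\epsilon$: there exist correctors $P^1,\Upsilon^1 \in L^2((0,T)\times\Omega,\tilde H^1_\sharp(Y))$ such that $\nabla P^\epsilon$ two-scale converges to $\nablax P^0+\nablay P^1$ and $\nabla \Upsilon^\epsilon$ to $\nablax \Upsilon(S^0)+\nablay \Upsilon^1$. Assumption (A11) guarantees that $K^\epsilon(x)=K(x,x/\epsilon)$ is an admissible two-scale test coefficient, so products with $K^\epsilon$ pass correctly to the two-scale limit.

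I would then insert the oscillating test function $\Psi_\epsilon(x,t):=\Psi(x,t)+\epsilon\,\psi(x,x/\epsilon,t)$ with $\Psi \in C^\infty_c(\Omega_T)$ and $\psi \in C^\infty_c(\Omega_T,C^\infty_\sharp(Y))$ into (\ref{weak-form-1})--(\ref{weak-form-2}) and pass to the limit $\epsilon \to 0$. Since $\Psi_\epsilon-\Psi = O(\epsilon)$ in $L^2((0,T),H^1(\Omega))$, the weak convergence of $\Phi^\epsilon \partial_t S^\epsilon$ combined with the weak-$\ast$ average property of $\Phi^\epsilon$ from (A3)/(A10) yields
\[\int_0^T \langle \Phi^\epsilon \partial_t S^\epsilon, \Psi_\epsilon \rangle\, dt \;\longrightarrow\; \int_0^T \langle \Phi^0 \partial_t S^0, \Psi \rangle\, dt.\]
On the flux side, products such as $K^\epsilon \lambda_w(S^\epsilon) \nabla P^\epsilon$ two-scale converge to $K(x,y)\lambda_w(S^0)(\nablax P^0+\nablay P^1)$, as a consequence of strong convergence of $\lambda_w(S^\epsilon)$ paired with two-scale convergence of $K^\epsilon \nabla P^\epsilon$; the $\Upsilon$-gradient and gravity terms are handled analogously. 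Testing against $\Psi$ with $\psi\equiv 0$ yields the macroscopic balance, while the $\epsilon\,\psi(x,x/\epsilon,t)$ part isolates the cell equations satisfied by $P^1$ and $\Upsilon^1$; summing both contributions produces the two identities claimed in the theorem.

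The main obstacle is the passage to the limit in the nonlinear couplings $\lambda_\alpha(S^\epsilon)\nabla P^\epsilon$ and $\lambda_\alpha(S^\epsilon) g$, which requires strong $L^2(\Omega_T)$-compactness of $S^\epsilon$; this in turn rests on the Kolmogorov-type spatial translation estimate in Lemma \ref{a-priori-estimates-for-homogenization} together with the $L^2((0,T),H^{-1})$ bound on $\Phi^\epsilon \partial_t S^\epsilon$. Once this strong compactness is secured, the remaining two-scale passages are routine, and the regularity $P^1,\Upsilon^1 \in L^2((0,T)\times\Omega,\tilde H^1_\sharp(Y))$ follows from the standard structure theorem for two-scale limits of bounded $H^1$ sequences.
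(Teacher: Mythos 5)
Your proposal follows essentially the same route as the paper's proof: a priori bounds plus the Amaziane-type compactness lemma give strong $L^2$-convergence of $S^{\epsilon}$, the Allaire structure theorem gives the two-scale limits $\nablax P^0+\nablay P^1$ and $\nablax\Upsilon(S^0)+\nablay\Upsilon^1$, and oscillating test functions $\Psi+\epsilon\psi(x,x/\epsilon,t)$ with the admissibility of $K^{\epsilon}$ from (A11) yield the coupled two-scale system, which is then decoupled by testing separately with $\Psi$ and $\psi$. The argument is correct and matches the paper's.
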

This system above has a unique solution. This is easy to verify as soon as we reformulate the problem into macroscopic formulation (see also end of proof of Theorem \ref{homogenization-result} below).

\begin{proof}[Proof of Theorem \ref{two-scale-homogenized-system}]
In the following, we make use of the concept of two-scale convergence (c.f. \cite{Allaire:1992} or \cite{Lukkassen:Nguetseng:Wall:2002} for an overview on this topic). A sequence $u^{\epsilon} \in L^2(\Omega)$ is called two-scale convergent to a limit $u_0 \in L^2(\Omega \times Y)$ if the following holds for all $\phi \in L^2(\Omega, C^0_{\sharp}(Y))$:
\begin{align*}
\lim_{\epsilon \rightarrow 0} \int_{\Omega} u^{\epsilon}(x) \phi(x,\frac{x}{\epsilon}) \hspace{2pt} dx = \int_{\Omega} \int_{Y} u_0(x,y) \phi(x,y) \hspace{2pt} dy \hspace{2pt} dx.
\end{align*}
As proved by Allaire \cite{Allaire:1992}, bounded sequences in $H^1(\Omega)$ allow to extract two-scale convergent subsequences, i.e. for any bounded set $(u^{\epsilon})_{\epsilon >0} \subset H^1(\Omega)$, there exists a subsequence $(u^{\epsilon^{\prime}})_{\epsilon^{\prime} >0}$ and functions $u_0 \in H^1(\Omega)$ and $u_1 \in L^2(\Omega, \tilde{H}^1_{\sharp}(Y))$ such that $u^{\epsilon^{\prime}} \rightharpoonup u_0$ in $H^1(\Omega)$ and $\nabla u^{\epsilon^{\prime}} \overset{\mbox{2-Sc.}}{\rightharpoonup} \nablax u_0 + \nablay u_1$. Using this well know result, the a priori estimates in Lemma \ref{a-priori-estimates-for-homogenization} and the compactness   in Lemma \ref{compactness-lemma-amaziane-et-al}, we get the following convergence up to a subsequence (still denoted by $\epsilon$): there exist functions $S^0 \in L^2(\Omega_T)$, $\Upsilon^0\in L^2((0,T),H^1(\Omega))$, $\Upsilon^1\in L^2((0,T) \times \Omega,\tilde{H}^1_{\sharp}(Y))$, $P^0 \in L^2((0,T),H^1(\Omega))$ and $P^1 \in L^2((0,T) \times \Omega,\tilde{H}^1_{\sharp}(Y))$ such that
\begin{align*}
S^{\epsilon} \rightarrow S^0 \enspace \mbox{in} \enspace L^2(\Omega_T), \enspace \nabla \Upsilon^{\epsilon} \overset{\mbox{2-Sc.}}{\rightharpoonup} \nablax \Upsilon^0 + \nablay \Upsilon^1 \enspace \mbox{and} \enspace \nabla P^{\epsilon} \overset{\mbox{2-Sc.}}{\rightharpoonup} \nablax P^0 + \nablay P^1,
\end{align*}
where we have $\Upsilon^0= \Upsilon(S^0)$, because of $\Upsilon^{\epsilon}\rightharpoonup\Upsilon^0$ in $L^2((0,T),H^1(\Omega))$ and $\Upsilon^{\epsilon}\rightarrow \Upsilon(S^0)$ in $L^2(\Omega_T)$. In particular, we also have $P^{\epsilon} \rightharpoonup P^0 \quad \mbox{in} \enspace L^2(\Omega_T)$.

Next, we use test functions of the form
\begin{align*}
\Psi^{\epsilon}(x,t) = \Psi(x,t) + \epsilon \psi(x,\frac{x}{\epsilon},t), \quad \mbox{where} \enspace (\Psi,\psi)\in C_c^{\infty}(\Omega_T) \times C_c^{\infty}(\Omega_T,C^{\infty}_{\sharp}(Y)),
\end{align*}
in the weak two-phase flow system given by (\ref{weak-form-1})-(\ref{weak-form-2}). Using the admissibility of $K^{\epsilon}$ (i.e. assumption (A11), c.f. \cite{Allaire:1992}) and the two-scale convergence, we can form the limits with respect to $\epsilon$ to obtain:
\begin{eqnarray*}
\lefteqn{\int_{0}^T \langle \Phi^0 \partial_t S^0(\cdot,t), \Psi(\cdot,t ) \rangle_{H^{-1}(\Omega),H^1_0(\Omega)} \hspace{2pt} dt}\\
&=& - \int_{\Omega_T} \int_Y K(x,y) (\lambda_{w}(S^0(x)) (\nablax P^0(x) + \nablay P^1(x,y))) \cdot (\nablax \Psi(x) + \nablay \psi(x,y)) \hspace{2pt} dy \hspace{2pt} dx\\
&\enspace& - \int_{\Omega_T} \int_Y K(x,y) ((\nablax \Upsilon(S^0(x)) + \nablay \Upsilon^1(x,y)) - \lambda_w(S^0(x)) \rho_w g) \cdot (\nablax \Psi(x) + \nablay \psi(x,y)) \hspace{2pt} dy \hspace{2pt} dx
\end{eqnarray*}
and
\begin{eqnarray*}
\lefteqn{0 = \int_{\Omega_T} \int_Y K(x,y) ( \lambda(S^0(x)) (\nablax P^0(x) + \nablay P^1(x,y)) - (\lambda_w(S^0(x)) \rho_w + \lambda_n(S^0(x)) \rho_n) g )} \\
&\enspace& \hspace{30pt} \cdot \left( \nablax \Psi(x) + \nablay \psi(x,y) \right) \hspace{2pt} dy \hspace{2pt} dx \hspace{200pt}
\end{eqnarray*}
for all $\Psi \in L^2((0,T),H^1_0(\Omega))$ and $\psi \in L^2(\Omega_T,H^1_{\sharp}(Y))$ (due to density of the smooth functions). Boundary and initial values for $S^0$ and $P^0$ remain valid since they hold for the whole $\epsilon$-sequence. Now, choosing first $\Psi=0$ and then $\psi=0$ we can decouple the problem into coarse scale and fine scale equations.
\end{proof}

With the previous result, we can now reformulate the two-scale homogenized system until it has the HMM form (\ref{weak-hmm-form-1})-(\ref{weak-hmm-form-2}).

\begin{proof}[Proof of Theorem \ref{homogenization-result}]
We start from the solution $(S^c,P^c) \in \Epsilon$ of the fully continuous HMM problem (\ref{weak-hmm-form-1})-(\ref{weak-hmm-form-2}) (of which we know that it exists and that it is unique). Let us for simplicity assume that $\kappa= \epsilon$ (the general case with $\kappa= k \epsilon$ for $k \in \mathbb{N}$ follows directly, because we can exploit the periodicity and glue the solutions for $k=1$ together to create the solution for arbitrary $k$). Let $w_{\kappa}^i \in L^2(\Omega,\tilde{H}^1_{\sharp}(Y))$ denote the cell basis elements in the periodic setting for $k=1$, i.e. solving
\begin{align*}
\int_Y K(x,\frac{x+ \epsilon y}{\epsilon}) (e_i + \nablay w_{\epsilon}^i(x,y)) \cdot \nablay \psi(y)  \hspace{2pt} dy = 0 
\end{align*}
for all $\psi \in \tilde{H}^1_{\sharp}(Y)$.  The definition and the usage of periodicity of $K(x,\cdot)$ and $w_{\epsilon}^i(x,\cdot)$ implies that
\begin{align}
\label{def-K-0-nonsymmetric}\left(K^0_{\epsilon,\epsilon}\right)_{ij}(x) 
\nonumber &=\int_Y K(x,\frac{x+ \epsilon y}{\epsilon}) (e_i + \nablay w_{\epsilon}^i(x,y))\cdot e_j \hspace{2pt} dy\\
&=\int_Y K(x,y) (e_i + \nablay w^i(x,y))\cdot e_j \hspace{2pt} dy
\end{align}
with $w_i(x,y):=w_{\epsilon}^i(x,y-\frac{x}{\epsilon})$. We now define $P^f(x,\cdot,t)\in \tilde{H}^1_{\sharp}(Y)$ by
\begin{align*}
P^f := \sum_{i=1}^n w_i \left( \partial_{x_i}P^c - \frac{(\lambda_w(S^c(x,t)) \rho_w + \lambda_n(S^c(x,t)) \rho_n)}{\lambda(S^c(x,t))} g_i \right).
\end{align*}
Using the definition of $w_i$ we get
\begin{eqnarray}
\nonumber\label{result-eqn-1}\lefteqn{\int_Y K(x,y) \lambda(S^c(x,t)) \nablay P^f(x,y,t) \cdot \nablay \psi(y) \hspace{2pt} dy}\\
&=& - \int_Y K(x,y) \lambda(S^c(x,t)) \nablax P^c(x,t) \cdot \nablay \psi(y) \hspace{2pt} dy \\
\nonumber&\enspace& \quad + \int_Y K(x,y) (\lambda_w(S^c(x,t)) \rho_w + \lambda_n(S^c(x,t)) \rho_n) g \cdot \nablay \psi(y) \hspace{2pt} dy.
\end{eqnarray}
Next, we make the simplifying notation $G := (\lambda_w(S^c) \rho_w + \lambda_n(S^c) \rho_n) g$. Plugging this, the definition of $K^0_{\epsilon,\epsilon}$ and the definition of $P^f$ into the HMM pressure equation (\ref{weak-hmm-form-2}) gives for $\Psi \in L^2((0,T),H^1_0(\Omega))$:
\begin{align*}
0 &= \int_{\Omega_T} \lambda(S^c) K^0_{\epsilon,\epsilon} \nablax P^c \cdot \nabla \Psi - \int_{\Omega_T} K^0_{\epsilon,\kappa} G \cdot \nablax \Psi \\
&\overset{(\ref{def-K-0-nonsymmetric})}{=} \int_{\Omega_T} \lambda(S^c) K^0_{\epsilon,\epsilon} \nablax P^c \cdot \nabla \Psi  - \int_{\Omega_T} \left( \int_Y K \left(  G + \sum_{i=1}^n (w_i G_i) \right) \right) \cdot \nablax \Psi \\
&= \int_{\Omega_T} \int_Y K \left( \lambda(S^c) (\nablax P^c + \sum_{i=1}^n w_i \partial_{x_i}P^c)
- \sum_{i=1}^n (w_i G_i) - G \right) \cdot \nablax \Psi \\
&= \int_{\Omega_T} \int_Y K ( \lambda(S^c) (\nablax P^c + \nablay P^f) - G ) ) \cdot \nablax \Psi.
\end{align*}
Together with (\ref{result-eqn-1}), we observe that $(P^c,P^f)$ just solves (\ref{tsh-pressure}), which verifies the pressure equation.

Next, we deal with the saturation. For given $P^c$, $P^f$ and $S^c$, let $\Upsilon^f(x,\cdot,t) \in \tilde{H}^1_{\sharp}(Y)$ denote the solution of
\begin{eqnarray}
\nonumber\label{result-eqn-2}0 = \int_Y K (\lambda_{w}(S^c) (\nablax P^c + \nablay P^f) + (\nablax \Upsilon(S^c) + \nablay \Upsilon^f) - \lambda_w(S^c) \rho_w g) \cdot \nablay \psi 
\end{eqnarray} 
for all $\psi \in \tilde{H}^1_{\sharp}(Y)$. Furthermore, we define $\tilde{ \Upsilon}^f:=(\lambda_{w}(S^c) P^f + \Upsilon^f)$ which solves
\begin{eqnarray*}
\int_Y K ( V + \nablay \tilde{ \Upsilon}^f) \cdot \nablay \psi  = 0 \quad \forall \psi \in \tilde{H}^1_{\sharp}(Y)
\end{eqnarray*} 
where $V := \lambda_{w}(S^c) \nablax P^c + \nablax \Upsilon(S^c) - \lambda_w(S^c) \rho_w g$.
We immediately verify the relation $\tilde{ \Upsilon}^f = \sum_{i=1}^n w_i V_i$. Inserting this in the macro equation for the saturation (\ref{tsh-saturation}) gives us for $\Psi \in L^2((0,T),H^1_0(\Omega))$:
\begin{eqnarray*}
\lefteqn{\int_{0}^T \langle \Phi^0 \partial_t S^c(\cdot,t), \Psi(\cdot,t ) \rangle_{H^{-1}(\Omega),H^1_0(\Omega)} \hspace{2pt} dt}\\
&=& - \int_{\Omega_T} K^0_{\epsilon,\epsilon} (\lambda_{w}(S^c) \nablax P^c + \nablax \Upsilon(S^c) - \lambda_w(S^c) \rho_w g) \cdot \nablax \Psi \\
&=& - \int_{\Omega_T} K^0_{\epsilon,\kappa} V \cdot \nablax \Psi
\overset{(\ref{def-K-0-nonsymmetric})}{=} - \int_{\Omega_T}\int_Y K (V  +  \sum_{i=1}^n V_i \nablay w_i) \cdot \nablax \Psi \\
&=& - \int_{\Omega_T}\int_Y K (\lambda_{w}(S^c) \nablax P^c + \nablax \Upsilon(S^c) + \nablay \tilde{ \Upsilon}^f - \lambda_w(S^c) \rho_w g) \cdot \nablax \Psi \\
&=& - \int_{\Omega_T}\int_{Y} K (\lambda_{w}(S^c) \left( \nablax P^c + \nablay P^f\right) \cdot \nablax \Psi \\
&\enspace& \quad - \int_{\Omega_T}\int_{Y} K \left( \nablax \Upsilon(S^c) + \nablay \Upsilon^f - \lambda_{w}(S^c) \rho_w g \right) \cdot \nablax \Psi.
\end{eqnarray*}
Together with (\ref{result-eqn-2}), we see that $(P^c,P^f)$ also solves (\ref{tsh-saturation}). Hence: $(P^c,P^f)=(P^0,P^1)$ (since all equations work in both directions). So the HMM approximation is identical to the homogenized solution (which is just the limit for $\epsilon\rightarrow 0$). The existence of $(P^c,P^f)$ is fulfilled,  because $K^0_{\epsilon,\kappa}$ is a matrix fulfilling assumption (A2). Since the assumptions (A1) and (A3)-(A7) are also still valid we can again use the existence result by Chen (c.f. \cite[Theorem 2.1]{Chen:2001} ) to deduce existence of $(S^c,P^c) \in \Epsilon$. If additionally (A8) and (A9) are also valid this solution is unique (c.f. \cite[Theorem 3.1]{Chen:2001}). By the just demonstrated equivalence, we obtain the same result for $(P^0,P^1)$.
\end{proof}
\begin{remark}[Homogenized problem in cell problem formulation]
Assume that (A1)-(A7) and (A10)-(A13) hold true. From the above proof we conclude that the two-scale homogenized system presented in Theorem \ref{two-scale-homogenized-system} can be also expressed in a cell problem formulation. This leads to an ($\epsilon$-independent) cell basis $\{w^i \in L^2(\Omega,\tilde{H}^1_{\sharp}(Y))| \enspace 1 \le i \le d\}$ with $w^i \in L^2(\Omega,\tilde{H}^1_{\sharp}(Y))$ solving
\begin{align*}
\int_Y K(x,y) (e_i + \nablay w^i(x,y)) \cdot \nablay \psi(y) \hspace{2pt} dy = 0 \quad \psi \in \tilde{H}^1_{\sharp}(Y)
\end{align*}
and an effective conductivity $K^0$ given by
\begin{align}
\label{periodic-case-exact-hom-coeff}\left(K^0\right)_{ij}(x) := \int_Y K(x,y) (e_i + \nablay w^i(x,y))\cdot  (e_j + \nablay w^j(x,y)) \hspace{2pt} dy.
\end{align}
Then, the homogenized solution $(S^0,P^0) \in \Epsilon$ with $S^0(\cdot,0)=S_0$ solves the following effective two phase flow system for all $\Psi \in L^2((0,T),H^1_0(\Omega))$
\begin{eqnarray*}
\lefteqn{\int_{0}^T \langle \Phi^0 \partial_t S^0(\cdot,t), \Psi(\cdot,t ) \rangle_{H^{-1}(\Omega),H^1_0(\Omega)} \hspace{2pt} dt}\\
\nonumber&=& - \int_{\Omega_T} K^0 (\lambda_{w}(S^0) \nabla P^0 + \nabla \Upsilon(S^0) - \lambda_w(S^0) \rho_w g) \cdot \nabla \Psi  \hspace{2pt} dt  \hspace{2pt} dx\\
0 &=& \int_{\Omega_T} K^0 ( \lambda(S^0) \nabla P^0 - (\lambda_w(S^0) \rho_w + \lambda_n(S^0) \rho_n) g ) ) \cdot \nabla \Psi \hspace{2pt} dt \hspace{2pt} dx.
\end{eqnarray*}
\end{remark}

\begin{proof}[Proof of Corollary \ref{modeling-error-corollary}]
From classic homogenization theory (cf. \cite{MuT97}) we know that a coefficient $K^{\epsilon}$ that fulfills (A11) (i.e. which is locally periodic) has the unique $G$-limit $K^0$, with $K^0$ given by (\ref{periodic-case-exact-hom-coeff}). On the other hand, we have seen in the proof of Theorem \ref{homogenization-result} that the coefficient $K^0_{\epsilon,\kappa}$ (given by (\ref{effective-hydraulic-conductivity-periodic})) can be transformed into $K^0$ by a simple integral transformation (see in particular (\ref{def-K-0-nonsymmetric})). Hence $K^0_{\epsilon,\kappa}$ is independent of $\epsilon$ and $\kappa$ and the modeling error is zero.
\end{proof}

\subsection{Proof of  Theorem \ref{a-posteriori-main-result} (A posteriori error estimate)}

In this section we will briefly sketch how to prove the a posteriori error estimate stated in Theorem \ref{a-posteriori-main-result}. To a large extend one can proceed as in the recent work by Canc\`{e}s, Pop and Vohral\'{i}k \cite{Cances:Pop:Vohralik:2012} where a rigorous a posteriori error estimate for the two-phase flow model is derived. Because of this, we just shortly discuss the two main steps and refer to the arguments in \cite{Cances:Pop:Vohralik:2012}. The main difference to the setting in \cite{Cances:Pop:Vohralik:2012} is that we need an additional treatment of the effective intrinsic permeability $K^0_{\epsilon,\kappa}$. A minor difference is the presence of gravity terms, which are ignored in \cite{Cances:Pop:Vohralik:2012}. Both details do not lead to crucial changes and keep the proves rather straightforward.

We start with defining the pressure and saturation residuals. More precisely, for $K^0$ denoting the homogenized matrix as in Theorem \ref{a-posteriori-main-result} and for $(S_{H\tau},P_{H\tau}) \in \Epsilon$, we define the pressure residual $\mathcal{R}_p(S_{H\tau},P_{H\tau}) \in L^2((0,T),H^1_0(\Omega))$ and the saturation residual $\mathcal{R}_s(S_{H\tau},P_{H\tau}) \in L^2((0,T),H^1_0(\Omega))$ by:
\begin{align*}
\langle \mathcal{R}_p(S_{H\tau},P_{H\tau}),\Psi \rangle  &:= \int_{\Omega_T} \lambda(S_{H\tau}) K^0 \nabla P_{H\tau} \cdot \nabla \Psi \\
&\enspace \hspace{-50pt}- \int_{\Omega_T} K^0 (\lambda_w(S_{H\tau}) \rho_w + \lambda_n(S_{H\tau}) \rho_n) g \cdot \nabla \Psi,\\
\langle \mathcal{R}_s(S_{H\tau},P_{H\tau}),\Psi\rangle  &:= \int_{0}^T \langle \Phi^0 \partial_t S_{H\tau}(\cdot,t), \Psi(\cdot,t ) \rangle_{H^{-1}(\Omega),H^1_0(\Omega)} \hspace{2pt} dt\\
&\enspace \hspace{-50pt} + \int_{\Omega_T} K^0 (\lambda_{w}(S_{H\tau}) \nabla P_{H\tau} + \nabla \Upsilon(S_{H\tau}) - \lambda_w(S_{H\tau}) \rho_w g) \cdot \nabla \Psi.
\end{align*}
for $\Psi \in L^2((0,T),H^1_0(\Omega))$. It is obvious that $(S_{H\tau},P_{H\tau}) \in \Epsilon$ with $S_{H\tau}(\cdot,0)=S_0$ is the weak solution of (\ref{weak-hom-form-1})-(\ref{weak-hom-form-2}) if and only if $\langle \mathcal{R}_p(S_{H\tau},P_{H\tau}),\Psi \rangle=\langle \mathcal{R}_s(S_{H\tau},P_{H\tau}),\Psi\rangle=0$.

In the next step, the errors can be bounded by the residuals in analogy to \cite[Theorem 5.7]{Cances:Pop:Vohralik:2012} (the only difference is the presence of gravity, which however does not have a big influence and the proof still goes through). This leads to the following result.
\begin{lemma}[Upper bound on the error by the residuals]
\label{upper-bounds-on-error-by-resudals}
Let assumptions (A1)-(A9) and (A16) be fulfilled and let $(S_{H\tau},P_{H\tau}) \in \Epsilon$ denote an arbitrary pair of functions. Then the following estimate holds true:
\begin{eqnarray*}
\lefteqn{ ||| S_{H\tau} - S^0 |||^2_{\Omega_T} + \| P_{H\tau} -  P^0 \|_{E(\Omega_T)}^2 + \| \Upsilon(S_{H\tau}) - \Upsilon(S^0) \|_{L^2(\Omega_T)}^2}\\
&\lesssim |||S_{H\tau}(\cdot,0) - S_0 |||^2_{\Omega} + |||\mathcal{R}_p(S_{H\tau},P_{H\tau})|||_{\Omega_T}^2 + |||\mathcal{R}_s(S_{H\tau},P_{H\tau})|||_{\Omega_T}^2.
\end{eqnarray*}
\end{lemma}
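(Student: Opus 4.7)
The plan is to exploit that the residuals $\mathcal{R}_p$ and $\mathcal{R}_s$ vanish when evaluated on $(S^0,P^0)$, so their values on $(S_{H\tau},P_{H\tau})$ can be rewritten as differences of the nonlinear forms at the two argument pairs, and then to test them with the natural error-related functions. Since $(S_{H\tau},P_{H\tau})$ and $(S^0,P^0)$ both lie in $\mathcal{E}$ with the same Dirichlet data, the functions $P_{H\tau}-P^0$ and $\Upsilon(S_{H\tau})-\Upsilon(S^0)$ are admissible elements of $L^2((0,T),H^1_0(\Omega))$. First I would feed $\Psi=P_{H\tau}-P^0$ into the pressure residual identity; after subtracting the corresponding identity for $(S^0,P^0)$ this produces the leading coercive term $\int_{\Omega_T}\lambda(S_{H\tau})\,K^0\nabla(P_{H\tau}-P^0)\cdot\nabla(P_{H\tau}-P^0)$, which by (A1) and the spectral bounds of $K^0$ dominates $c_\lambda\|P_{H\tau}-P^0\|_{E(\Omega_T)}^2$, plus cross contributions of the form $\int(\lambda(S_{H\tau})-\lambda(S^0))K^0\nabla P^0\cdot\nabla(P_{H\tau}-P^0)$ and analogous gravity terms.

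In a second step I would test the saturation residual with $\Psi=\Upsilon(S_{H\tau})-\Upsilon(S^0)$. The diffusive part produces the coercive quantity $\|\Upsilon(S_{H\tau})-\Upsilon(S^0)\|_{E(\Omega_T)}^2$, while the time-derivative pairing $\int_0^T\langle\Phi^0\partial_t(S_{H\tau}-S^0),\Upsilon(S_{H\tau})-\Upsilon(S^0)\rangle\,dt$ is handled by the Alt--Luckhaus chain-rule inequality, using the strict monotonicity and Lipschitz continuity of $\Upsilon$ from (A4) to obtain a nonnegative terminal contribution minus the initial squared mismatch $|||S_{H\tau}(\cdot,0)-S_0|||_\Omega^2$. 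The dual error norm $|||S_{H\tau}-S^0|||_{\Omega_T}^2$ on the left-hand side is then recovered from this terminal contribution by exploiting the monotone pairing $(S_{H\tau}-S^0)(\Upsilon(S_{H\tau})-\Upsilon(S^0))\ge 0$ and the definition of $|||\cdot|||_{\Omega_T}$. The residual norms on the right-hand side appear automatically from the dual pairings $\langle\mathcal{R}_p,P_{H\tau}-P^0\rangle$ and $\langle\mathcal{R}_s,\Upsilon(S_{H\tau})-\Upsilon(S^0)\rangle$ bounded by $|||\mathcal{R}_p|||_{\Omega_T}\,\|P_{H\tau}-P^0\|_{E(\Omega_T)}$ and $|||\mathcal{R}_s|||_{\Omega_T}\,\|\Upsilon(S_{H\tau})-\Upsilon(S^0)\|_{E(\Omega_T)}$, after which Young's inequality with small parameter absorbs the factors $\|P_{H\tau}-P^0\|_{E(\Omega_T)}$ and $\|\Upsilon(S_{H\tau})-\Upsilon(S^0)\|_{E(\Omega_T)}$ into the left-hand side.

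The main obstacle, as in Cancès--Pop--Vohral\'ik, is the bookkeeping of the nonlinear cross terms containing $\lambda(S_{H\tau})-\lambda(S^0)$ and $\lambda_\alpha(S_{H\tau})-\lambda_\alpha(S^0)$: these are quadratic in the saturation error but multiplied by the gradient factor $K^0\nabla P^0$. Here assumption (A9) is essential to extract an $L^\infty(\Omega_T)$ bound on $\nabla P^0$, and assumption (A8) then controls the square of each difference by the product $(S_{H\tau}-S^0)(\Upsilon(S_{H\tau})-\Upsilon(S^0))$, which recombines with the coercive terms already on the left-hand side once Young's inequality is applied. The new gravity contributions, absent from \cite{Cances:Pop:Vohralik:2012}, generate only additional terms of the form $\int(\lambda_\alpha(S_{H\tau})-\lambda_\alpha(S^0))\rho_\alpha g\cdot K^0\nabla(\cdot)$ which are handled by the same (A8)--Young combination and do not perturb the structure of the estimate. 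A final Gronwall argument in time, combined with $|||S_{H\tau}(\cdot,0)-S_0|||_\Omega^2$ coming out of the chain-rule step, closes the bound in the form stated.
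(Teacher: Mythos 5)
The paper does not actually prove this lemma itself: it invokes \cite[Theorem 5.7]{Cances:Pop:Vohralik:2012} and only remarks that the additional gravity terms do not disturb the argument. Your sketch reconstructs exactly the energy--duality architecture behind that theorem (which goes back to Chen's uniqueness proof): test the pressure residual with $P_{H\tau}-P^0$ and the saturation residual with $\Upsilon(S_{H\tau})-\Upsilon(S^0)$ (both admissible since the pairs share the Dirichlet data), use the Alt--Luckhaus chain rule for the parabolic pairing, control the nonlinear cross terms with (A9) and (A8), and close with Young and Gronwall. Your treatment of the gravity terms is also the one the paper has in mind, so in substance you follow the same route.

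There is, however, one step whose stated mechanism does not work: the recovery of $||| S_{H\tau} - S^0 |||^2_{\Omega_T}$. The terminal Bregman term produced by the chain rule, together with the monotone pairing and the Lipschitz bound from (A4), controls $\| \Upsilon(S_{H\tau}) - \Upsilon(S^0) \|_{L^2(\Omega_T)}$; it does \emph{not} control the dual norm $||| S_{H\tau} - S^0 |||_{\Omega_T}=\sup_{\Psi}\int_{\Omega_T}(S_{H\tau}-S^0)\Psi/\|\Psi\|_{E(\Omega_T)}$, because passing from $\Upsilon(S_{H\tau})-\Upsilon(S^0)$ back to $S_{H\tau}-S^0$ would require Lipschitz continuity of $\Upsilon^{-1}$, which is not available ((A13) gives only H\"older continuity and is not even among the hypotheses of the lemma). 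The correct mechanism, and the one used in \cite{Cances:Pop:Vohralik:2012}, is a separate duality-in-time argument: for arbitrary $\Psi\in L^2((0,T),H^1_0(\Omega))$ one tests the difference of the two saturation identities with $\tilde\Psi(\cdot,t):=\int_t^T\Psi(\cdot,\tau)\,d\tau$, integrates by parts in time, and thereby expresses $\int_{\Omega_T}\Phi^0(S_{H\tau}-S^0)\Psi$ through $\mathcal{R}_s$, the diffusive flux differences (already controlled by the energy-norm errors) and a boundary term at $t=0$. That boundary term is the actual source of $|||S_{H\tau}(\cdot,0) - S_0 |||^2_{\Omega}$ on the right-hand side; attributing this term to the chain-rule step is backwards, since the chain rule produces an initial Bregman contribution of $L^2$ type, which is not dominated by the (weaker) dual norm of the initial error. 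With this duality step supplied, your outline matches the cited proof.
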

To finalize proof of the a posteriori error estimates stated in Theorem \ref{a-posteriori-main-result} and Corollary \ref{a-posteriori-main-result-periodic}, we hence only need to bound the residuals. Again, this is analogous to \cite{Cances:Pop:Vohralik:2012}, with the only difference, that it also involves an estimate for the effective intrinsic permeability. But this estimate for $K^0_{\epsilon,\kappa}$ can be reduced to an a posteriori error estimate for standard elliptic problems. Hence in this setting, the subsequent lemma is easy to prove.

\begin{lemma}
\label{estimate-fine-scale-error}Let assumption (A2) be fulfilled, let $\D \in \mathcal{D}_H^n$ and let $x_{\D}$ denote the corresponding center. Then there holds
\begin{eqnarray*}
\lefteqn{\| K^0_{\epsilon,\kappa} - K^0_{\epsilon,\kappa,h} \|_{L^{\infty}(\D)}
\le C \frac{\beta_{\mathcal{D},\kappa}}{\alpha_{\mathcal{D},\kappa}} \sup_{x \in \D} \|(K_{\epsilon,\kappa}(x,\cdot)-K_{\epsilon,\kappa,h}(x_{\D},\cdot)) (e_i + \nablay w_{\kappa,h}^i(x_{\D},\cdot))\|_{L^2(Y)} \qquad }\quad\\
&\enspace& \quad + C \frac{\beta_{\mathcal{D},\kappa}}{\alpha_{\mathcal{D},\kappa}} \left( \sum_{E \in \Gamma(\Tauh(Y))} h_E \| [K_{\epsilon,\kappa,h}(x_{\D},\cdot) (e_i + \nablay w_{\kappa,h}^i(x_{\D},\cdot))]_{E} \|_{L^2(E)}^2 \right)^{\frac{1}{2}},
\end{eqnarray*}
where $C$ is a generic constant and $\alpha_{\mathcal{D},\kappa}$ denotes the smallest and $\beta_{\mathcal{D},\kappa}$ the largest eigenvalue of $K^{\epsilon}(x_{\D}+ \kappa y)$ in $Y$.
\end{lemma}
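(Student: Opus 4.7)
The plan is to bound the difference $K^0_{\epsilon,\kappa}(x) - K^0_{\epsilon,\kappa,h}(x_{\mathcal{D}})$ by introducing an auxiliary quantity and splitting into two sources of error: (i) the replacement of the exact coefficient $K_{\epsilon,\kappa}(x,\cdot)$ by the piecewise constant approximation $K_{\epsilon,\kappa,h}(x_{\mathcal{D}},\cdot)$, and (ii) the finite element discretization of the cell problem associated with $K_{\epsilon,\kappa,h}(x_{\mathcal{D}},\cdot)$. For fixed $x \in \mathcal{D}$, let $\tilde{w}_{\kappa}^i(x_{\mathcal{D}},\cdot) \in \tilde{H}^1_{\sharp}(Y)$ denote the exact cell solution corresponding to the approximated coefficient, i.e. the solution of
\begin{align*}
\int_Y K_{\epsilon,\kappa,h}(x_{\mathcal{D}},y)(e_i + \nablay \tilde{w}_{\kappa}^i(x_{\mathcal{D}},y)) \cdot \nablay \psi(y) \, dy = 0 \quad \forall \psi \in \tilde{H}^1_{\sharp}(Y),
\end{align*}
and define the corresponding effective tensor $\tilde{K}^0_{\epsilon,\kappa}(x_{\mathcal{D}})$ by the usual integral formula. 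The triangle inequality then gives
\begin{align*}
\| K^0_{\epsilon,\kappa}(\cdot) - K^0_{\epsilon,\kappa,h}(x_{\mathcal{D}}) \|_{L^\infty(\mathcal{D})} \le \sup_{x \in \mathcal{D}} | K^0_{\epsilon,\kappa}(x) - \tilde{K}^0_{\epsilon,\kappa}(x_{\mathcal{D}}) | + | \tilde{K}^0_{\epsilon,\kappa}(x_{\mathcal{D}}) - K^0_{\epsilon,\kappa,h}(x_{\mathcal{D}}) |.
\end{align*}

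For the first term, I subtract the two defining integrals and rewrite the integrand using the symmetry structure of the effective tensor (testing each cell problem against the other's solution): this is the standard computation that reduces $(K^0)_{ij}$ to $\int_Y K (e_i + \nablay w^i) \cdot e_j$. After that reduction, the difference is a single integral involving $(K_{\epsilon,\kappa}(x,\cdot) - K_{\epsilon,\kappa,h}(x_{\mathcal{D}},\cdot))(e_i + \nablay w_{\kappa}^i(x,\cdot))$ up to a term involving $\tilde{w}_{\kappa}^i(x_{\mathcal{D}},\cdot) - w_{\kappa}^i(x,\cdot)$. By Cauchy-Schwarz and the $H^1$-stability of the cell problem under coefficient perturbations (which contributes the conditioning factor $\beta_{\mathcal{D},\kappa}/\alpha_{\mathcal{D},\kappa}$ via Lax-Milgram), both contributions are controlled by the supremum appearing in the first summand of the claimed bound; finally I replace $\nablay w_{\kappa}^i(x,\cdot)$ in that supremum by $\nablay w_{\kappa,h}^i(x_{\mathcal{D}},\cdot)$ at the cost of the same type of term.

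For the second term, $\tilde{w}_{\kappa}^i(x_{\mathcal{D}},\cdot)$ and $w_{\kappa,h}^i(x_{\mathcal{D}},\cdot)$ solve a \emph{standard} linear elliptic problem on $Y$ with the same $L^\infty$ coefficient $K_{\epsilon,\kappa,h}(x_{\mathcal{D}},\cdot)$, the latter being a conforming $\mathbb{P}^1$ approximation in $W_h(Y)$. Hence the classical residual-based a posteriori estimate of Verf\"urth type applies on the torus mesh $\Tauh(Y)$: since the strong residual vanishes element-wise (the coefficient $K_{\epsilon,\kappa,h}$ is element-wise constant and $w_{\kappa,h}^i$ is piecewise linear, so $\nabla \cdot (K_{\epsilon,\kappa,h}(e_i + \nablay w_{\kappa,h}^i)) = 0$ on each $\mathcal{Y}$), only the normal-flux jumps across $\Gamma(\Tauh(Y))$ survive, yielding
\begin{align*}
\|\nablay(\tilde{w}_{\kappa}^i(x_{\mathcal{D}},\cdot) - w_{\kappa,h}^i(x_{\mathcal{D}},\cdot))\|_{L^2(Y)} \lesssim \frac{1}{\alpha_{\mathcal{D},\kappa}^{1/2}} \Big( \sum_{E \in \Gamma(\Tauh(Y))} h_E \|[K_{\epsilon,\kappa,h}(x_{\mathcal{D}},\cdot)(e_i + \nablay w_{\kappa,h}^i(x_{\mathcal{D}},\cdot))]_E\|_{L^2(E)}^2 \Big)^{1/2}.
\end{align*}
Inserting this into the integral representation of $\tilde{K}^0_{\epsilon,\kappa}(x_{\mathcal{D}}) - K^0_{\epsilon,\kappa,h}(x_{\mathcal{D}})$ via Cauchy-Schwarz and bounding $\|e_i + \nablay \tilde{w}_{\kappa}^i(x_{\mathcal{D}},\cdot)\|_{L^2(Y)}$ by the energy estimate (contributing another factor $(\beta_{\mathcal{D},\kappa}/\alpha_{\mathcal{D},\kappa})^{1/2}$) produces the second summand of the claimed bound.

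The main obstacle is book-keeping rather than ideas: one needs to pass cleanly from the non-symmetric representation of $K^0$ (with only $e_j$ on one side) to the symmetric one used in the statement, ensuring that each occurrence of a cell-gradient factor is absorbed into the conditioning ratio $\beta_{\mathcal{D},\kappa}/\alpha_{\mathcal{D},\kappa}$ rather than blown up to a larger constant. Once that is done, the first summand is essentially a data-oscillation estimate and the second is a textbook residual-based a posteriori bound on the periodic cell, so no new analytical tools beyond Lax-Milgram, Galerkin orthogonality and a Cl\'ement-type interpolation on $\tilde{H}^1_{\sharp}(Y)$ are required.
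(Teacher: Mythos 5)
Your strategy is the one the paper itself has in mind: the paper gives no detailed proof of this lemma, only the remark that it ``can be reduced to an a posteriori error estimate for standard elliptic problems,'' and your decomposition into a coefficient--oscillation part and a Verf\"urth-type jump-residual part on the periodic cell mesh is exactly that reduction. The second half of your argument (element residuals vanish because $K_{\epsilon,\kappa,h}(x_{\D},\cdot)$ is elementwise constant and $w_{\kappa,h}^i$ is piecewise linear, so only the normal-flux jumps survive after Cl\'ement interpolation) is standard and correct, and the nonsymmetric representation $\left(K^0\right)_{ij}=\int_Y K\,(e_i+\nablay w^i)\cdot e_j$ is indeed the right device to turn gradient errors of the correctors into errors of the effective tensor at the cost of one factor $\beta_{\D,\kappa}$.

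The one step you should not wave away is the final substitution ``replace $\nablay w_{\kappa}^i(x,\cdot)$ by $\nablay w_{\kappa,h}^i(x_{\D},\cdot)$ at the cost of the same type of term.'' The discrepancy is $(K_{\epsilon,\kappa}(x,\cdot)-K_{\epsilon,\kappa,h}(x_{\D},\cdot))\nablay\bigl(w_{\kappa}^i(x,\cdot)-w_{\kappa,h}^i(x_{\D},\cdot)\bigr)$, whose $L^2(Y)$-norm is only controlled by $2\beta_{\D,\kappa}\,\|\nablay(w_{\kappa}^i-w_{\kappa,h}^i)\|_{L^2(Y)}$; bounding that gradient again requires the very a posteriori estimate you are proving, so done naively the argument is either circular or, if you first bound $\|\nablay(w_{\kappa}^i-w_{\kappa,h}^i)\|$ by the two computable estimators and then re-insert, it costs an extra factor of $\beta_{\D,\kappa}/\alpha_{\D,\kappa}$ beyond the single power claimed in the lemma. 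The clean repair is to skip the intermediate corrector $\tilde w_{\kappa}^i$ altogether and compare $w_{\kappa}^i(x,\cdot)$ with $w_{\kappa,h}^i(x_{\D},\cdot)$ in one energy estimate: writing $e:=w_{\kappa}^i(x,\cdot)-w_{\kappa,h}^i(x_{\D},\cdot)$ and using the exact cell equation, the splitting $K_{\epsilon,\kappa}(e_i+\nablay w_{\kappa,h}^i)=(K_{\epsilon,\kappa}-K_{\epsilon,\kappa,h})(e_i+\nablay w_{\kappa,h}^i)+K_{\epsilon,\kappa,h}(e_i+\nablay w_{\kappa,h}^i)$, and discrete Galerkin orthogonality on the second summand yields $\alpha_{\D,\kappa}\|\nablay e\|^2\lesssim(\eta_{APP}+m(\D,\epsilon,\kappa,h))\,\|\nablay e\|$ with both estimators already in their computable form involving $\nablay w_{\kappa,h}^i(x_{\D},\cdot)$. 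Feeding this into the nonsymmetric representation of the tensor difference then gives the stated bound with the single factor $\beta_{\D,\kappa}/\alpha_{\D,\kappa}$. With that adjustment your proof is complete and consistent with the paper's intended (but unwritten) argument.
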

Combining Lemmas \ref{upper-bounds-on-error-by-resudals} and \ref{estimate-fine-scale-error} with the previous results concludes the proof of Theorem \ref{a-posteriori-main-result} and Corollary \ref{a-posteriori-main-result-periodic}.

\section{Conclusion}
In this paper, we derived the first version of a heterogeneous multiscale method for the incompressible two-phase flow equations. The method can be applied to the classical formulation of the two-phase flow system as well as to the Kirchhoff transformed system. We presented different discretization strategies that are based on a finite element 
method on the micro scale and a vertex centered finite volume method on the macro scale. In the periodic setting, the method is equivalent to a discretization of the homogenized equation. Finally, a rigorous and effective a posteriori error estimate for the error between HMM approximations and the homogenized solutions is presented. The estimate is based on the results obtained in \cite{Cances:Pop:Vohralik:2012}.

$\\$
{\bf{Acknowledgements.}} We would like to thank the anonymous reviewers for their accurate and constructive feedback which helped us to improve the paper.

\thispagestyle{myheadings}\markright{}
\bibliographystyle{abbrv}


\end{document}